\newtheorem{teo}{Theorem}[section]
\newtheorem{lem}[teo]{Lemma}
\newtheorem{cor}[teo]{Corollary}
\newtheorem{prop}[teo]{Proposition}
\newtheorem{es}[teo]{Examples}
\newcommand{\medint}{-\kern  -,395cm\int}
\theoremstyle{remark}
\newtheorem{oss}[teo]{Remark}
\theoremstyle{definition}
\newtheorem{defi}[teo]{Definition}%[section]
\theoremstyle{definition}
\newcommand{\norm}[1]{\left\|{#1}\right\|}
\newcommand{\dive}{\mathrm{div}}
\newcommand{\supp}{\mathrm{supp}}
\newcommand{\bbR}{{\mathbb{R}}}
\newcommand{\bbN}{{\mathbb{N}}}
\newcommand{\R}{\bbR}
\newcommand{\MM}{\mathcal{M}}
\newcommand{\LL}{\mathcal{L}}
\def\nm #1{ \left\langle #1 \right\rangle }
\def\de{{\rm d}}
\def\BV{{\rm BV}}
\def\PR{{\rm Per}}
\def\cD{{\mathcal D}}
\def\A{{w}}
\newcommand{\astar}{\hat{{w}}}
\newcommand\weak{{\rightharpoonup}}
\newcommand{\norma}[1]{{\left\| #1\right\|}}
\def\upiu{u^+}
\def\umeno{u^-}
\def\uint{{u^{i}}}
\def\uext{{u^{e}}}
\newcommand{\Haus}[1]{{\mathscr H}^{#1}} % Misura di Hausdorff
\def\cleardoublepage{\clearpage\if@twoside \ifodd\c@page\else
\hbox{}
\thispagestyle{empty}
\newpage
\if@twocolumn\hbox{}\newpage\fi\fi\fi}
\title{Relaxation for a degenerate functional with linear growth in the onedimensional case}
\author[V.~Chiad\`o Piat]{Valeria Chiad\`o Piat}
\address{Dipartimento di scienze Matematiche Giuseppe Lagrange, Corso duca degli abruzzi, 24, Torino (Italy)}
\email{valeria.chiadopiat@polito.it}\author[V.~De Cicco]{Virginia De Cicco}
\address{Dipartimento di Scienze di Base  e Applicate per l'Ingegneria, Sapienza Univ.\ di Roma\\
	Via A.\ Scarpa 10 -- I-00185 Roma (Italy)}
\email{virginia.decicco@uniroma1.it}
\author[A.~Melchor Hernandez]{Anderson Melchor Hernandez}
\address{Dipartimento di Matematica, Via Zamboni, 33, 40126, Bologna (Italy)}
\email{anderson.melchor@unibo.it}
\keywords{Lower semicontinuity, relaxation, degenerate variational integrals, weight, Poincar\'e inequality}
\subjclass[2020]{26A15,49J45}
\begin{document}

\begin{abstract}
In this work, we study the relaxation of a degenerate functional with linear growth, depending on a weight $w$ that does not exhibit doubling or Muckenhoupt-type conditions.
In order to obtain an explicit representation of the relaxed functional and its domain, our main tools for are Sobolev inequalities with double weight.
\end{abstract}
\maketitle

\tableofcontents
\section{Introduction}
In this work, we focus on the study of an integral functional in one dimension with linear growth, allowing for a degenerate weight $w$.  We aim to provide an explicit relaxation formula for the functional 
\begin{align}\label{FunctF}
F(u)\coloneqq\left\{
\begin{aligned}
& \int_{\Omega} |u'|w \de x\text{ \ \ if } u\in  {\mathrm AC}(\overline\Omega),\\
& +\infty \ \ \ \ \ \ \ \ \ \ \ \ \ \ \text{if}\hskip 0,1cm  u\in X\setminus {\mathrm AC}(\overline\Omega),
\end{aligned}
\right.
\end{align}
where $\Omega$ is an open bounded set in $\mathbb{R}$, $u'$ denotes the derivative of $u$, $w$ is a nonnegative, locally integrable function, ${\rm AC}(\overline\Omega)$ is the space of absolutely continuous functions on $\overline{\Omega}$, and $X$ is a topological space comprising measurable functions which will be introduced later on. 
We will find an explicit expression of the lower semicontinuous envelope of $F$, that is denoted by $\overline{F}$ with respect to a suitable convergence. \\
Several studies have focused on investigating functionals with $p$-growth for $1<p<+\infty$ within different functional frameworks; see, for example, \cite{CD,CC,FM,%Ha,
Ma}.
Nevertheless, there are few works dedicated to the analysis of functionals with linear growth like \eqref{FunctF} above (see, for instance, \cite{Braides1995} and references therein).
In the recent work \cite{CCMH}, we have analyzed the $p$-version of the functional $F$, defined as
\begin{align}\label{FunctFp}
F_{p}(u)\coloneqq\left\{
\begin{aligned}
& \int_{\Omega} |u'|^{p}w\, \de x\text{ \ \ if } u\in {\mathrm AC}(\overline\Omega),\\
& +\infty \ \ \ \ \ \ \ \ \ \ \ \ \ \ \text{if}\hskip 0,1cm  u\in X\setminus {\mathrm AC}(\overline\Omega),
\end{aligned}
\right.
\end{align}
where $w$ does not exhibit doubling or Muckenhoupt-type conditions, \cite{Muk}.
In that case, we have conducted the analysis in weighted Sobolev spaces; we refer to \cite{LAMB2,LAMB3,LAMB1,HKST} for general approaches to the definition of these spaces.
Let us briefly explain our strategy in the case $1<p<+\infty$, and what is different in the present case $p=1$. We first proved Poincar\'e inequalities involving
$w$ and an auxiliary weight $\hat{w}_{p}$ that corrects the weight in the zones where $w$ is  strongly degenerate (i.e. $w^{-\frac{1}{p-1}}$ is not summable). Specifically, we showed that the
$p$-norm of the gradient term of a generic function $u$ weighted by $w$,  is greater up to a suitable constant than the $p$-norm of $u$ weighted by $(\hat{w}_{p})^{p-1}$. 
Subsequently, assuming that $w$ is finitely degenerate (see \cite[Definition 2.1]{CCMH}), and in view of such a Poincaré inequality with two different weights, we proceeded to choose $X=L^{p}((\hat{w}_{p})^{p-1})$, and showed that $AC$-functions are dense, in a suitable Sobolev space $W\subseteq X$. As a consequence, we were able to determine the finiteness domain of the relaxed functional $\overline{F}_{p}$ by performing the relaxation in the strong topology of $X$. 

In the present work, we follow some of the previous ideas, but we cannot apply verbatim such methodology. 
The first reason is that for a functional with linear growth like \eqref{FunctF}, it is necessary to work with $\BV$ like spaces, rather than Sobolev spaces, and the second reason is that the functional in this case can be interpreted as a pairing.

A class of weighted bounded variation functions ${\rm BV}(\Omega;w)$ in any dimension ($\Omega\subset\R^n$) is introduced in \cite{Baldi} (see Section \ref{baldibv} where we recall the definitions and the results of \cite{Baldi}).\\ 
By requiring that $\A>0$ and $\A$ belongs to the Muckenhoupt class $A_{1}$, (see Definition \ref{first:A1} below) it is possible to define a weighted ${\rm BV}(\Omega,w)$-space. A priori such weight $\A$ is only a.e. defined, but it is not restrictive to assume that condition $A_{1}$ holds for any point in $\Omega$ (this is possible since it can be proved that there exists a further weight lower semicontinuous $\tilde{w}$ that defines the same weighted ${\rm BV}$-space, and satisfies $A_{1}$ at any point, see Lemma \ref{equivalentbv} below). Moreover a density theorem holds true in ${\rm BV}(\Omega,w)$ (see Theorem \ref{densbv} below) and by assuming the local growth  condition \eqref{localgrowth} a Poincar\'e inequality holds (see Theorem \ref{baldipoincarepesata} below).

In the present paper, although confining the study to the onedimensional case, we follow another approach.
We will deal with a weight $w\geq 0$ (and so it admits large degeneration), that does not belong to the Muckenhoupt class $A_{1}$ (and so it is only a.e. defined) and does not satisfy any doubling condition.
We will consider a new category of spaces that we denote as $\BV^{\A}_{{\rm loc}}(\Omega)$ inspired to some $BV$ like space recently introduced in \cite{COM}, although this approach forces us to assume some regularity of the weight, i.e. $w$ is a $BV_{{\rm loc}}$ within the largest open set where $\frac{1}{\A}$ is bounded. %Roughly speaking, the space $\BV^{\A}_{{\rm loc}}(\Omega)$ denotes the class of measurable functions $u:\Omega\rightarrow \R$ where the following Anzellotti pairing $(\A,Du)$ is a Radon measure (see \cite{ANZ} for its original definition).

More precisely, we say that $u\in \BV^{\A}_{{\rm loc}}(\Omega)$ if it is a Borel function that belongs to $L^1_{\rm{loc}}(\Omega, w)\cap L^1_{\rm{loc}}(\Omega, |Dw|)$, such that the Anzellotti pairing $(\A,Du)$, defined below
is a Radon measure (see \cite{ANZ} for its original definition). %In the above distributional definition of pairing, $w\in BV_{\rm{loc}}(\Omega)$. 
%If  $w\in W^{1,1}$,  then the class $\BV^{\A}_{{\rm loc}}(\Omega)$ is a linear space. 
Morover, under  suitable assumptions this class  is a Banach space. 
%with the norm .... (CHECK)

Under the assumption $w\in BV_{\rm{loc}}(\Omega)$, the distributional definition of the pairing is the following \begin{align}\label{introduction:pairing}
\nm{(\A,Du),\varphi}\coloneqq -\int_{\Omega}u^{\frac12} \varphi\,\de D\A- \int_{\Omega} u  \varphi^{\prime}\A\,\de x, \hskip 0,1cm \text{\ \ for $\varphi\in C_{c}^{\infty}(\Omega)$.}
\end{align}

Here $\varphi^\prime$ denotes the derivative of $\varphi$, $Du$ denotes the distributional derivative of $u$, and $u^{\frac12}$ the precise representative of $u$ (see Appendix 1 for a more detail explanation). The space $\BV^{\A}(\Omega)$ was introduced in \cite{COM} because it is the natural functional space where the distributional derivative defined in \eqref{introduction:pairing} is a Radon measure. 
%As shown in \cite{COM} this space is strictly larger than $\BV(\Omega)$.
\\
In the present work, we find that  $\BV^{\A}(\Omega)$ is the natural ambient space in which an explicit formula for $\overline{F}$ can be expressed. Therefore, to ensure a suitable behavior of \eqref{introduction:pairing}, we restrict our analysis to the following setup. We assume that $\A$ is a nonnegative function such that $\A$ is locally integrable in $\Omega$.
Our objective is to demonstrate that, under these conditions, the relaxed functional can be expressed by means of  a pairing, as studied in \cite{COM} and \cite{COM1}. 
This pursuit is built upon innovative concepts introduced in those works, where $\BV^{\A}(\Omega)$ spaces, 
consisting of functions that satisfy divergence-measure properties, are larger than the conventional $BV(\Omega)$-spaces in \cite{AFP}, or the weighted $BV(\Omega,\A)$-spaces in \cite{Baldi}.
By following \cite{CC,CCMH}, our chosen space $X$ comprises $W^{1,1}$-functions with a degenerate weight $\A$. 
The pairing of such functions $u$ with $\A$ consists in a Radon measure within the largest open set where $\frac{1}{\A}$ is bounded. 
This requires the introduction of an additional weight, denoted as $\astar$. This corrective function addresses the singularities inherent in the respective weight $\A$. 
Moreover, in this scenario, we also prove a weighted Poincar\'e inequality involving $\A$ and $\astar$. 

Subsequently, in Section 3, we assume that $w$ is finitely degenerate (see Definition 2.1 below)
and the stronger condition that the weight $w$  belongs to $W_{{\rm loc}}^{1,1}$ within the largest open set where $\frac{1}{\A}$ is bounded.
We then relax ${F}$ with respect to a weak convergence involving $\astar$ and $\vert D\A\vert$, which we will refer to as $(\astar,D\A)$-convergence.
This is similar to the $(\A,\frac{1}{2})$-convergence introduced in \cite{COM} (see Definition \ref{atwo} before). 
The main difference lies in the choice of the $L^{1}(\astar)$-weak convergence rather than $L^{1}(\A)$-weak convergence.\\

This work is structured as follows. In Section \ref{sec:4}, we define $\hat w$ and prove
%analyze $F$, presenting results on 
the validity of weighted Poincar\'e inequalities, see Theorem \ref{poincare1} below. 
Thanks to this result, we are allowed in Section \ref{sec:relax1}, once introduced our $(\astar,D\A)$-convergence, to prove a compactness theorem with respect to this convergence and to prove our relaxation theorem,
see Definition \ref{ourconv}, and Theorem \ref{maindegen}, respectively. Lastly, in Appendix \ref{sec:append}, we revisit some fundamental concepts from geometric measure theory, applicable to all dimensions $n\geq 1$, 
and we recall the notion of pairing as studied in \cite{COM}.
In Section \ref{baldibv}, we recall some similar results about weighted Poincaré inequalities when $\A$ belongs to the Muckenhoupt class $A_{1}$, obtained in \cite{Baldi}.

\section{Poincar\'e inequalities with double weight}\label{sec:4}
Let $\Omega=(a,b)$ be a bounded open interval. In what follows, we make the following structural assumptions:

\begin{itemize}
\item[(H1)] $\A\geq 0$;
\item[(H2)]$\A\in L_{{\rm loc}}^{1}(\Omega)$.
\end{itemize}  Here we denote by $I_{\Omega,\A}$ the biggest open bounded set contained in $\Omega$ such that $\frac{1}{\A}$ is $L_{\rm loc}^{\infty}(I_{\Omega,\A})$-function. Then $I_{\Omega,\A}$  can be written in a unique way as the union of pairwise disjoint open intervals $(a_{i},b_{i})\subset \Omega$, that is,

\begin{align*}
I_{\Omega,\A}=\bigcup_{i=1}^{N_{\A}}(a_{i},b_{i}),
\end{align*}
with $1\leq N_{\A}\leq +\infty$. Furthermore, since $\frac{1}{\A}\in L_{\rm loc}^{\infty}(I_{\Omega,\A})$,  for every $i=1,\ldots,N_{\A}$ and $K\Subset (a_i,b_i)$ there exists a nonnegative constant $c_{i,K}$ such that 
\begin{align}\label{localc}
\frac1{\A(x)}\leq c_{i,K} \hskip 0,1cm \text{for a.e. $x\in K$.}
\end{align}
\begin{defi}
\begin{itemize}
\item[(i)] If $I_{\Omega,\A}=\,\emptyset$, we put $N_{\A}:=0$.
\item[(ii)]  If $1\le\,N_{\A}<\,\infty$  we say that $\A$ is {\it finitely degenerate} in $\Omega$. 
\item[(iii)]   If $N_{\A}=\,+\infty$  we say that $\A$ is {\it not finitely degenerate} in $\Omega$.
\end{itemize}
\end{defi}
\begin{es}\label{valeria} 
{\rm Let us consider the following examples.
\begin{enumerate}
\item[(I)] Let  $\A(x)=(1-x^2)^2$ defined in the interval  $(-2,2)$: then, $I_{\Omega,\A}=(-2,-1)\cup(-1,1)\cup(1,2)$, and $\A$ is finitely degenerate with $N_{\A}=3$. 
\item[(II)] Let  $\A(x)=1+\sin\frac1 x$ defined in the interval $(0,1)$: since $\A(x_i)=0$ if $x_i=\frac1{\pi(\frac32+2i)}$, $i\in\bbN$, we have that $I_{\Omega,\A}=\bigcup_{i\in\bbN}(x_{i+1},x_i)$ and $\A$ is not finitely degenerate, i.e. $N_{\A}=+\infty$.
\end{enumerate}
}
\end{es}

\subsection{An auxiliary weight}
Let $\astar:\Omega\to [0,+\infty[$ be defined as
\begin{equation*}
\astar(x):=
\begin{cases}
\displaystyle\lim_{x\to a_i^+}  \left(\norm{ \A^{-1}}_{L^{\infty}\left(\left(x,\frac{a_{i}+b_{i}}{2}\right)\right)}\right)^{-1}    &\text{ if }x=a_i \\
\left(\norm{ \A^{-1}}_{L^{\infty}\left(\left(x,\frac{a_{i}+b_{i}}{2}\right)\right)}\right)^{-1}  &\text{ if } {a_i}< x\leq {{3a_i+b_i}\over{4}}\\
\left(\norm{ \A^{-1}}_{L^{\infty}\left(\left(\frac{3a_{i}+b_{i}}{4},\frac{a_{i}+3b_{i}}{4}\right)\right)}\right)^{-1}  &\text{ if } {{3a_i+b_i}\over{4}}\leq x\leq {{a_i+3b_i}\over{4}}\\
 \left(\norm{ \A^{-1}}_{L^{\infty}\left(\left(\frac{a_{i}+b_{i}}{2},x\right)\right)}\right)^{-1}  &\text{ if } {{a_i+3b_i}\over{4}}\leq x<b_i\\
\displaystyle\lim_{x\to b_i^-} \left(\norm{ \A^{-1}}_{L^{\infty}\left(\left(\frac{a_{i}+b_{i}}{2},x\right)\right)}\right)^{-1}   &\text{ if } x=b_i\\
\ \ \ \ \ \ \ \ \ 0&\text{ if } x\in \Omega\setminus \overline I_{\Omega,\A}\,.
\end{cases}
\end{equation*}
\begin{oss}
At first glance, the definition of $\astar$ may seem subtle. Nevertheless, it is an important function with nice regularity properties, as presented in the next proposition, 
and it allows us to prove the validity of a Poincaré inequality with weights $\A$ and $\astar$, respectively.
It is also worth noting that a similar definition of the function $\astar$ was already considered in \cite{CCMH}, in the case where the functional $F$ defined in \eqref{FunctF} is replaced by \eqref{FunctFp}. 
Instead, the present work addresses the case $p=1$ separately, because the tools used in \cite{CCMH} were developed in a Sobolev context,
whereas here we need tools beyond ${\rm BV}(\Omega)$-spaces recently developed in \cite{COM1,COM,CRA,CRA1}.
\end{oss}
In the following figures, we illustrate the behavior of the function $\astar$ for a specific choice of $\A$, while in Proposition \ref{dualpropw}, we prove some of its mathematical properties.
\begin{figure}[!ht]
\centering
\begin{subfigure}[b]{0.4\textwidth}
\includegraphics[width=\textwidth]{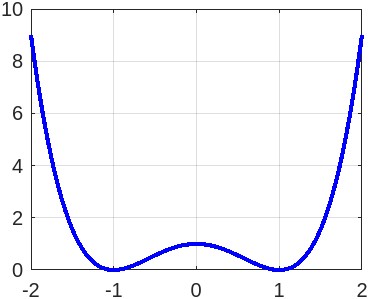}
\end{subfigure}
\hskip 0,3cm
  \begin{subfigure}[b]{0.4\textwidth}
    \includegraphics[width=\textwidth]{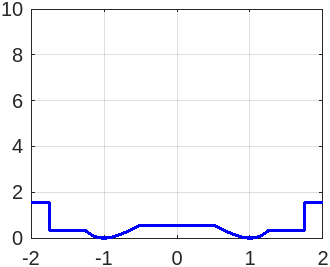}
  \end{subfigure}
   \caption{In the first figure on the left hand side, we have the profile of $w(x)=(1-x^{2})^{2}$ for  $x\in (-2,2)$, while in the right hand side, we have its associated weight $\astar$. In this case, we note that $N_{w}=3$.}
\end{figure}
 \vskip 0,1cm
Let us collect some properties of the function $\astar$ in the following Proposition.
\begin{prop}[Properties of $\astar$]\label{dualpropw}
Suppose that {\rm{(H1)-(H2)}} hold true.
\begin{itemize}
\item[(i)] For each $i=1,\dots, N_{\A}$, $\astar$ is constant in $[{{3a_i+b_i}\over{4}},{{a_i+3b_i}\over{4}}]$, increasing in $[{a_i},{{3a_i+b_i}\over{4}}]$ and so a ${\rm BV}$-function in $\left[a_{i},\frac{3a_{i}+b_{i}}{4}\right)$, decreasing in $[{{a_i+3b_i}\over{4}},b_i]$ and so a ${\rm BV}$-function in $\left(\frac{a_{i}+3b_{i}}{4},b_{i}\right]$. Moreover, it holds that
\begin{align}\label{stima1}
&0\,<\astar(x)\le\,\sup_{y\in (a_i,b_i)}\astar(y) \eqqcolon L_{i}<\,\infty\quad\forall\,x\in (a_i,b_i)\,,\\\label{stima2}
&M_{i,K}\coloneqq \inf_{x\in K}\astar(x)>\,0\text{ for each $x\in K\Subset (a_i,b_i)$,}
\end{align}
and $\astar(a_i)=\,0$ (respectively $\astar(b_i)=\,0$) if and only if $\frac{1}{ \A}\notin L^{\infty}((a_i,\frac{a_i+b_i}{2}))$ (respectively $\frac{1}{ \A}\notin L^{\infty}((\frac{a_i+b_i}{2},b_i))$). 
\item[(ii)] If $\frac{1}{ \A}\in  L^{\infty}(\Omega)$, then there exists a  constant $c>\,0$ such that
\[
0<\,\frac{1}{c}\le\,\astar(x)\le\,c\quad\text{ a.e. } x\in\Omega\,.
\]
\item[(iii)] If $\A$ is finitely degenerate in $\Omega$, i.e.  $1\le\,N_\A<\,\infty$, then there exists a  constant $c>\,0$ such that
\[
0\le\,\astar(x)\le\,c\quad\text{ a.e. }x\in\Omega\,
\]
and $\astar\in {\rm BV}(\Omega)$.
\item[(iv)] If $\A$ is not finitely degenerate in $\Omega$, i.e.  $N_{\A}=\,\infty$, then $\astar\in L^\infty_{\rm loc}(I_{\Omega,\A})$, and for each $1\leq i<\,+\infty$, we get $\astar\in {\rm BV}((a_{i},b_{i}))$.
\end{itemize}
\end{prop}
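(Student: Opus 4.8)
\noindent\emph{Proof plan.} The plan is to first put $\astar$ into a transparent form. For every measurable $A\subseteq\Omega$ with $|A|>0$ one has $\bigl(\norm{\A^{-1}}_{L^{\infty}(A)}\bigr)^{-1}=\mathrm{ess\,inf}_{A}\A$ (with the conventions $1/0=+\infty$ and $1/(+\infty)=0$), and, since by (H2) $\A<+\infty$ a.e.\ so that $\A^{-1}>0$ a.e.\ and hence $\norm{\A^{-1}}_{L^{\infty}(A)}>0$, this quantity is always finite. Consequently, away from the two interior points $\tfrac{3a_{i}+b_{i}}{4}$ and $\tfrac{a_{i}+3b_{i}}{4}$ of each $(a_{i},b_{i})$ --- where the definition formally produces two possibly different values, but only on a null set, which is harmless for a weight --- the function $\astar$ agrees on $(a_{i},b_{i})$ with $x\mapsto\mathrm{ess\,inf}_{J(x)}\A$, where $J(x)\subseteq(a_{i},b_{i})$ is an interval with $\overline{J(x)}\Subset(a_{i},b_{i})$ for every $x\in(a_{i},b_{i})$, which shrinks for inclusion as $x$ increases on the left quarter of $(a_{i},b_{i})$, grows as $x$ increases on the right quarter, equals the fixed interval $(\tfrac{3a_{i}+b_{i}}{4},\tfrac{a_{i}+3b_{i}}{4})$ on the central half, and satisfies $J(x)\uparrow(a_{i},\tfrac{a_{i}+b_{i}}{2})$ as $x\to a_{i}^{+}$ and $J(x)\uparrow(\tfrac{a_{i}+b_{i}}{2},b_{i})$ as $x\to b_{i}^{-}$. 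Throughout I use that $B\mapsto\mathrm{ess\,inf}_{B}\A$ is monotone for reverse inclusion and continuous along increasing families of sets, that is $\mathrm{ess\,inf}_{\bigcup_{n}B_{n}}\A=\inf_{n}\mathrm{ess\,inf}_{B_{n}}\A$ whenever $B_{n}\uparrow$ (equivalently, $\mathrm{ess\,sup}$ of $\A^{-1}$ is continuous from below).

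For (i). The monotonicity of $\astar$ on the three indicated sub--intervals is immediate from the monotonicity of $B\mapsto\mathrm{ess\,inf}_{B}\A$ and the inclusion behaviour of $J(x)$. For positivity: if $x\in(a_{i},b_{i})$ then $\overline{J(x)}\Subset(a_{i},b_{i})\subseteq I_{\Omega,\A}$, so $\A^{-1}\in L^{\infty}(J(x))$ and hence $\astar(x)=\mathrm{ess\,inf}_{J(x)}\A>0$. For the finiteness of $L_{i}:=\sup_{(a_{i},b_{i})}\astar$: by monotonicity $\astar$ is bounded above on the left quarter, on the central half and on the right quarter by $\mathrm{ess\,inf}\A$ over the three intervals $(\tfrac{3a_{i}+b_{i}}{4},\tfrac{a_{i}+b_{i}}{2})$, $(\tfrac{3a_{i}+b_{i}}{4},\tfrac{a_{i}+3b_{i}}{4})$ and $(\tfrac{a_{i}+b_{i}}{2},\tfrac{a_{i}+3b_{i}}{4})$ respectively, each of which has compact closure in $(a_{i},b_{i})$, so each such essential infimum is finite; hence $L_{i}<+\infty$ and \eqref{stima1} holds. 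The ${\rm BV}$--statements then follow since $\astar$ is bounded and monotone on each of the stated (half--open) intervals. For \eqref{stima2}: writing $K\subseteq[\alpha,\beta]$ with $a_{i}<\alpha\le\beta<b_{i}$, the piecewise monotonicity shows that $\inf_{K}\astar$ is the minimum of finitely many values $\astar$ takes at points of $(a_{i},b_{i})$, each positive by what precedes; hence $M_{i,K}>0$. Finally, continuity of $\mathrm{ess\,inf}$ along increasing families yields $\astar(a_{i})=\mathrm{ess\,inf}_{(a_{i},\frac{a_{i}+b_{i}}{2})}\A$ (so in particular the limit defining $\astar(a_{i})$ exists), which vanishes precisely when $\mathrm{ess\,sup}_{(a_{i},\frac{a_{i}+b_{i}}{2})}\A^{-1}=+\infty$, i.e.\ exactly when $\A^{-1}\notin L^{\infty}((a_{i},\tfrac{a_{i}+b_{i}}{2}))$; the assertion for $b_{i}$ is symmetric.

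For (ii)--(iv). If $1\le N_{\A}<\infty$, then by (i) we get $0\le\astar\le c$ a.e.\ with $c:=\max_{1\le i\le N_{\A}}L_{i}<\infty$ (recalling $\astar\equiv0$ on $\Omega\setminus\overline{I_{\Omega,\A}}$), and moreover $\astar$ is bounded and piecewise monotone with finitely many monotonicity pieces on $\Omega$, so it has finite pointwise total variation and $\astar\in{\rm BV}(\Omega)$; this is (iii). If $\A^{-1}\in L^{\infty}(\Omega)$, then $I_{\Omega,\A}=\Omega$ and $N_{\A}=1$, so the upper bound is the one just proved, while the lower bound follows from $\mathrm{ess\,sup}_{J(x)}\A^{-1}\le\norm{\A^{-1}}_{L^{\infty}(\Omega)}$, which gives $\astar\ge\norm{\A^{-1}}_{L^{\infty}(\Omega)}^{-1}>0$ a.e.; this is (ii). Finally, if $N_{\A}=\infty$, any $K\Subset I_{\Omega,\A}$ is covered by the pairwise disjoint open intervals $(a_{i},b_{i})$, hence by compactness meets only finitely many of them, and $\astar\le L_{i}<\infty$ on each; so $\sup_{K}\astar<\infty$, i.e.\ $\astar\in L^{\infty}_{\rm loc}(I_{\Omega,\A})$, and $\astar\in{\rm BV}((a_{i},b_{i}))$ for each finite $i$ exactly as in (iii) applied to a single interval; this is (iv).

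The argument is, in the end, elementary bookkeeping with monotone functions, and the only input that demands real care is the continuity of the essential infimum --- equivalently, of the essential supremum of $\A^{-1}$ --- along increasing families of intervals: this is the standard measure--theoretic fact on which both the well--posedness of the limits appearing in the definition of $\astar$ and the characterization of the vanishing of $\astar(a_{i})$ and $\astar(b_{i})$ rest, and it must be invoked explicitly rather than taken for granted. A minor additional point is the two--formula ambiguity of $\astar$ at the two interior quarter--points of each $(a_{i},b_{i})$: being supported on a null set, it affects none of the conclusions.
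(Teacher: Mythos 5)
Your argument is correct, and it is worth noting that the paper itself states Proposition \ref{dualpropw} without proof (only a remark comparing it to the analogous Proposition 2.5 of \cite{CCMH}), so there is no internal proof to match; your write-up supplies exactly the kind of verification the authors leave implicit. The key reduction — that on each $(a_i,b_i)$ one has $\astar(x)=\bigl(\norm{\A^{-1}}_{L^\infty(J(x))}\bigr)^{-1}=\mathrm{ess\,inf}_{J(x)}\A$ for a family of intervals $J(x)\Subset(a_i,b_i)$ that is monotone for inclusion on each quarter and constant on the central half — is the right one, and you correctly isolate the two genuine measure-theoretic inputs: the identity between the inverse of the essential supremum of $\A^{-1}$ and the essential infimum of $\A$ (with the conventions at $0$ and $+\infty$, legitimate since (H2) gives $\A<\infty$ a.e.), and the continuity of the essential infimum along increasing families of sets, which both justifies the existence of the limits defining $\astar(a_i)$, $\astar(b_i)$ and yields the stated characterization of their vanishing. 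Positivity on $(a_i,b_i)$ via \eqref{localc} applied to $\overline{J(x)}$, finiteness of $L_i$ from local integrability of $\A$ on the three fixed reference intervals, \eqref{stima2} from piecewise monotonicity, and parts (ii)--(iv) by bounded piecewise-monotone (hence BV) structure are all handled correctly; your observation that the definition assigns two possible values at the quarter points (and, one could add, at an endpoint shared by two adjacent intervals $b_i=a_{i+1}$) affects only a null set and none of the conclusions is also accurate.
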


\begin{oss} By definition $\hat w\leq w$ and
fixed $i=1,\dots,N_{\A}$ if the function $\A$ is increasing in $(a_i,\frac{3a_i+b_i}{4})$, then $\astar(x)=\A(x)$ a.e. in $(a_i,\frac{3a_i+b_i}{4})$. This is the case in Examples \ref{valeria}. In the case (I), the function $\A$ is increasing in $(-1,-\frac12)$ and in $(1,\frac54)$, while in the case (II) the function $\A$ is  increasing in $(x_{i+1},\frac{3x_{i+1}+x_i}{4})$.
\\
On the contrary, 
if $\A$ admits an oscillating behaviour in a right neighborhood of some $a_i$, it can be happen that $\astar\neq\A$ in this neighborhood (see Example in Remark \ref{controes} below). \\
On the other hand, let us notice that, unlike the case $1<p<+\infty$, our weight $\astar$  involves the inverse of the $L^{\infty}$-norm of $\A^{-1}$. However, we can say that $\astar$ is a ${\rm BV}_{{\rm loc}}$ function rather than absolutely continuous as it happens in the case $1<p<+\infty$, \cite[see Proposition 2.5 (ii)]{CCMH}. It is important to recognize that, in some sense, the conditions assumed in Proposition \ref{dualpropw} are the analogue counterpart of those assumed in \cite[Proposition 2.5 (ii)]{CCMH}. Specifically, while in such a Proposition, we required hypotheses to give a meaning to the integral of  $w^{-\frac{1}{p-1}}$ for $1<p<+\infty$, Proposition \ref{dualpropw} involves the $L^{\infty}$-norm of $w^{-1}$.
\end{oss}

In what follows, given $\A\in {\rm BV}_{\rm loc}(I_{\Omega,\A})$ we set
\begin{equation}\label{bvspa}
{\rm{Dom}}_{\A}\coloneqq \Big\{u:\Omega\to\R: u\in W^{1,1}_{\text{loc}}(I_{\Omega, \A}),
u\in {\rm BV}_{\rm loc}^{\A}(I_{\Omega,\A})\Big\}\,,
\end{equation}
where the class  ${\rm BV}_{\rm loc}^{\A}(I_{\Omega,\A})$ has been defined  in the Introduction.
%Furthemore, w
We note that this definition of ${\rm{Dom}}_{\A}$ differs from the one in \cite[formula 3]{CCMH}. Indeed, in \cite[formula (3)]{CCMH}, the definition of ${\rm{Dom}}_{w}$ does not require any regularity properties on the weight $w$. In fact, we have that in the case $1<p<+\infty$, ${\rm{Dom}}_{\A}$ is defined as

\begin{equation}\label{domwp}
{\rm{Dom}}_{w,p}:=\Big\{u:\Omega\to\R: u\in W^{1,1}_{\text{loc}}(I_{\Omega, w}),
\int_{I_{\Omega, w}} |u' |^p\,w\,\de x<+\infty\Big\}\,.
\end{equation}
The importance of the functional spaces \eqref{bvspa} and \eqref{domwp} is related to the relaxation result in Section \ref{sec:relax1} below and in \cite[Section 3]{CCMH}, respectively. For this reason we need to study the Poincar\'e inequality in ${\rm{Dom}}_{\A}$. 
\begin{oss}
The space ${\rm BV}_{\rm loc}^{\A}(I_{\Omega,\A})$ considered in the definition of ${\rm{Dom}}_{\A}$ in \eqref{bvspa} has been introduced recently in \cite{COM} in the general multidimensional setting.
We recall the definition and the main properties of ${\rm BV}_{\rm loc}^{\A}$ in the Appendix \ref{sec:append}, with the details in the onedimensional case. 
%\vali{In particular, we remark here that if $w\in W^{1,1}$,  then $\rm{Dom}_{\A}$ is a Banach space with the norm
%\begin{equation}\label{normadomw}
%||u||_{{\rm{Dom}}_{\A}}=||u||_{L^1(I_{\Omega,\A}, \A)}+|(w,Du)|(I_{\Omega,\A}, \vert {\rm D}\A\vert).
%\end{equation}
%(see Proposition \ref{domwBanach} below).}
We notice that
$$
{\rm{Dom}}_{\A}\subset L^{1}_{\rm loc}(I_{\Omega,\A}, \A)\cap L^{1}_{\rm loc}(I_{\Omega,\A}, \vert {\rm D}\A\vert),
$$
and by the definition of pairing in \eqref{def:pairing} below
\begin{align*}
\nm{(\A,Du),\varphi}\coloneqq -\int_{I_{\Omega,\A}}u \varphi\,\de {\rm D}\A- \int_{I_{\Omega,\A}} u\varphi^{\prime}\A\,\de x, \hskip 0,2cm \text{for $\varphi\in C_{c}^{\infty}(I_{\Omega,\A}),\, u\in {\rm{Dom}}_{\A}$ }.
\end{align*}
Here we used that, since $u\in W^{1,1}_{\text{loc}}(I_{\Omega, \A})$ we have $u=u^{\ast}$ (recall that $u^\ast$ is the precise representative of $u$, and since in the onedimensional case $W^{1,1}(I)=AC(\overline{I})$, we have that $u=u^{\ast}=u^{\frac{1}{2}}$ where $u^{\frac{1}{2}}$ is the trace of $u$ as defined in \eqref{traccia12}), the measure $(\A,Du)$ has the following expression
\begin{align*}
(\A,Du)(I)=\int_{I}u^{\prime}(x)\A(x)\de x,
 \text{\ \ for any $I\Subset I_{\Omega,\A}$},
\end{align*}
and, by definition of ${\rm BV}_{\rm loc}^{\A}(I_{\Omega,\A})$, its total variation is finite
\begin{align*}
|(\A,Du)|(I)<+\infty.
\end{align*}
Let us note that we have used the symbol $u^{\prime}$ to denote the derivative of $u$. In what follows, we will maintain this notation and will subsequently use $Du$ to denote
the distributional derivative of $u$. Let us now give some comments about  the ambient space ${\rm BV}_{\rm loc}^{\A}$, and further weighted Sobolev spaces used in the literature.

\begin{enumerate}
\item[$\bullet$] Note that when $\A$ is lower semicontinuous, and belongs to the Muckenhoupt class $A_{1}$  in $\Omega$,  it is possible to define the weighted space $\BV_{{\rm loc}}(\Omega,\A)$ that consists of functions $u\in L^{1}(I;\A)$ such that $\int_{I}\A \de \vert D u\vert<+\infty$, for each $I\Subset \Omega$, see Section \ref{baldibv} below. 
\item[$\bullet$] Notice that ${\rm BV}_{\rm loc}^{\A}(\Omega)$ is defined by means of the Anzellotti pairing, whose definition requires the BV regularity of $\A$. Hence,  $\BV_{\rm loc}(\Omega,\A)$ and ${\rm BV}_{\rm loc}^{\A}(\Omega)$ share similar properties, however, they are different spaces, as we will explain, not only by construction. Let us recall that by \cite[Remark 5]{Baldi}, one has that $\BV(\Omega,\A)\subseteq \BV(\Omega)$ (and also $\BV_{\rm loc}(\Omega,\A)\subseteq \BV_{{\rm loc}}(\Omega)$ ). A major difficulty in the definition of $\BV(\Omega;\A)$  is that we need the Muckenhoupt class $A_{1}$ to hold at any point, rather than almost everywhere. 
\item[$\bullet$]Since in our context we do not assume that $\A$ belongs to the Muckenhoupt class $A_{1}$, a priori, we have that $\BV_{\rm loc}^{\A}(\Omega)$ and $\BV_{\rm loc}(\Omega,\A)$ are not comparable. However, we may wonder whether $\BV_{\rm loc}^{\A}(\Omega)$ and $\BV_{\rm loc}(\Omega, \A)$ are related (or if $\BV^{\A}(\Omega)$ and $\BV(\Omega, \A)$ are related). For the sake of a lean explanation, let us suppose that $\A$ is lower semicontinuous, and belongs to the Muckenhoupt class $A_{1}$  in $\Omega$, and that $\A\in L^{\infty}(\Omega)$.  Then by Remark \ref{contained1} below, we have that $\BV_{{\rm loc}}(\Omega)\subset \BV_{{\rm loc}}^{\A}(\Omega)$, and thus

\begin{align}\label{importantrel}
\BV_{{\rm loc}}(\Omega,\A)\subset \BV_{{\rm loc}}(\Omega)\subset \BV_{{\rm loc}}^{\A}(\Omega).
\end{align}
\end{enumerate}
\end{oss}

\begin{oss}\label{controes}
Next, we show that $L^{1}(\Omega,\astar)$ is generally not contained in $L^{1}(\Omega,\A)$. That is, we give an example of $\A$ such that there exists $u\in L^{1}(\Omega;\astar)$, $|(\A,Du)|(\Omega)=\int_{\Omega}\vert u' \A\vert \de x<+\infty$, but $u\notin L^{1}(\Omega,\A)$ and so $u\notin {\rm BV}^{\A}(\Omega)$. Let us set $\Omega\coloneqq (0,2)$, and for each $h\in \bbN$, $h\neq 0$ define

\begin{align*}
&I_{h}^{1}\coloneqq \left(\frac{1}{h+1}, \frac{1}{2}\left(\frac{1}{h+1}+\frac{1}{h}\right)\right]; \hskip 0,1cm  I_{h}^{2}\coloneqq \left(\frac{1}{2}\left(\frac{1}{h+1}+\frac{1}{h}\right),\frac{1}{h}\right],\\
&I^{1}\coloneqq \cup_{h=1}^{\infty}I_{h}^{1}; \hskip 0,2cm I^{2}\coloneqq \cup_{h=1}^{\infty}I_{h}^{2}; \hskip 0,2cm I_{h}\coloneqq I_{h}^{1}\cup I_{h}^{2}.
\end{align*}
Fix $1<\beta<+\infty$, $0<\gamma<1$. We set $\A$ as
\begin{align*}
\A(x)\coloneqq \sum_{h=1}^{+\infty}h^{-2}x^{\gamma}\chi_{I_{h}^{1}}(x)+ \sum_{h=1}^{+\infty}h^{-2} x^{\beta}\chi_{I_{h}^{2}}(x)
\end{align*}
for every $x\in (0,1)$ and $\A(x)=\A(2-x)$ for every $x\in (1,2)$.
Note that $\|\A\|_\infty\leq 1$, $I_{\Omega,\A}=(0,2)$ and $\A\in BV((0,2))$. Since we defined the function $\A$ by simmetry in the interval $(0,2)$, it is enough to consider his behaviour only in the interval $(0,1)$. Notice that
\begin{align*}
\frac1{\astar(x)}=
\begin{dcases}
%\displaystyle\frac\{\lim_{x\to a_i^+}  \left(\norm{ \A^{-1}}_{L^{\infty}\left(\left(x,\frac{a_{i}+b_{i}}{2}\right)\right)}\right)^{-1} }  &\text{if $x=0$},
%\\
h^{2}\left(
\frac{1}{2}\left(\frac{1}{h+1}+\frac1h
\right)
\right)^{-\beta}
 \hskip 0,1cm &\text{if $x\in \left(0,\frac{1}{2}\right)\cap I_{h}^1$}\\
h^{2}x^{-\beta}  &\text{if $x\in \left(0,\frac{1}{2}\right)\cap I_{h}^2$},
 \\
2^{\beta} h^{2} &\text{ if } {{1}\over{2}}\leq x\leq 1,
%,\\
%&
%\frac19\left(\frac {24}{7}\right)^{\beta}
%\hskip 0,1cm 
%\text{if $\frac{1}{4}\leq x\leq \frac{3}{4}$, 
%}
%\\
%&\left(\frac {1}{2}\right)^{\beta} \hskip 0,1cm \text{if $x\in \left(\frac{3}{4},1\right)$. 
%}
\end{dcases}
\end{align*}
and so
\begin{align*}
\astar(x)=
\begin{dcases}
%0 &\text{if $x=0$},
%\\
h^{-2}\left(
\frac{1}{2}\left(\frac{1}{h+1}+\frac1h
\right)
\right)^{\beta}
 \hskip 0,1cm &\text{if $x\in \left(0,\frac{1}{2}\right)\cap I_{h}^1$},\\
h^{-2}x^{\beta}  &\text{if $x\in \left(0,\frac{1}{2}\right)\cap I_{h}^2$},
 \\
2^{-\beta} h^{-2} &\text{ if } {{1}\over{2}}\leq x\leq 1,
%,\\
%&
%\frac19\left(\frac {24}{7}\right)^{\beta}
%\hskip 0,1cm 
%\text{if $\frac{1}{4}\leq x\leq \frac{3}{4}$, 
%}
%\\
%&\left(\frac {1}{2}\right)^{\beta} \hskip 0,1cm \text{if $x\in \left(\frac{3}{4},1\right)$. 
%}
\end{dcases}
\end{align*}
and $\astar(0)=0$.
On the other hand, we set  $u(x)=\frac1{x^3}\in W^{1,1}_{\rm{loc}}((0,1)).$
%
%\begin{align*}
%u(x)\coloneqq
%\begin{dcases}
%& h^{1+\delta}\hskip 0,1cm \text{if $x\in I^{1}_h$,}\\
%&\frac{1}{h^{1+\delta}} \hskip 0,1cm \text{if $x\in I^{2}_h$.}
%\end{dcases}
%\end{align*}
By definition of $\A$ and $u$, we get that $\int_{0}^{1}u(x)\A(x)\de x=+\infty$. Indeed, note that

\begin{align*}
\int_{0}^{1}u(x)\A(x)\de x&=\sum_{h=1}^{\infty}h^{-2}\int_{I_{h}^{1}}x^{\gamma-3}\de x + \sum_{h=1}^{\infty}h^{-2}\int_{I_{h}^{2}}x^{\beta-3}\de x\\
&\sim \sum_{h=1}^{\infty}h^{-\gamma} + \sum_{h=1}^{\infty}h^{-\beta}
\end{align*}
which diverges because $\gamma<1$. On the other hand, we have that

\begin{align*}
\int_{0}^{1}u(x)\astar(x)\de x=\int_{0}^{\frac{1}{4}}u(x)\astar(x)\de x+ \int_{\frac{1}{4}}^{1}u(x)\astar(x)\de x,
\end{align*}
and by definition of $\astar$, $u$, and since the number of intervals of the form $I_{h}^{1},I_{h}^{2}$ contained in $\left(\frac{1}{2},1\right)$ is finite, then the term $\int_{\frac{1}{2}}^{1}u(x)\astar(x)\de x$ is finite. Let us note that

$$
\int_{0}^{\frac{1}{2}}u(x)\astar(x)\de x
%=\sum_{h=1}^{\infty}h^{-2}\int_{I_{h}^{1}\cap (0,1/4)}x^{\beta}\de x+ \sum_{h=1}^{\infty}h^{-2}\int_{I_{h}^{2}\cap (0,1/4)}x^{\beta} \de x
%\sim\sum_{h=1}^{\infty}h^{-2-\beta}\int_{I_{h}^{1}\cap (0,1/4)}x^{\beta}\de x
=\sum_{h=1}^{\infty}h^{-\beta}%+ \sum_{h=1}^{\infty}h^{-\delta-\beta}
$$
which are convergent because $\beta>1$.  Lastly, let us take a compact set $K\subset (0,1/2)$ such that its interior is a non-empty set. Note that
\begin{align*}
|(\A,Du)|(K)=\int_{K}\vert u^{\prime}\A\vert \de x\sim \sum_{h=1}^{\infty}h^{-\gamma-1} + \sum_{h=1}^{\infty}h^{-\beta-1}
%\sum_{h=1}^{\infty}\frac{1}{h}\left(\frac{1}{2}\left(\frac{1}{h+1}+\frac{1}{h}\right)\right)^{\gamma} + \sum_{h=1}^{\infty}h^{\delta}\left(\frac{1}{2}\left(\frac{1}{h+1}+\frac{1}{h}\right)\right)^{\beta}
\end{align*}
which is finite because $\beta >1,\ \gamma>0$, and thus we are done.
\end{oss}

\subsection{A weighted Poincar\'e inequality}
The following inequality is a first step into the proof of a Poincar\'e-type inequality in the domain $I_{\Omega,\A}$.
\begin{prop}
Suppose that ${\rm (H1)-(H2)}$ hold true, and $\A\in {\rm BV}_{\rm loc}(I_{\Omega,\A})$. Fix $1\leq i\leq N_{w}$. For all $u\in {\rm{Dom}}_{\A}$, and any $\eta,x$ such that $a_i<\eta\leq x\leq {{a_i+b_i}\over{2}}$ we have
\begin{equation}\label{b1}
|u(x)-u(\eta)|\,\astar(\eta)
\leq  \int_{\eta}^x|u'(y)|\A(y)\,\de y\,;
\end{equation}
\begin{equation}\label{b2}
|u(\eta)|\astar(\eta)\leq |u(x)|\astar(\eta)+ \int_{a_i}^{x}|u'(y)|\A(y)\,\de y\,.
\end{equation}
For every $\eta,x$ such that ${{a_i+b_i}\over{2}}\leq x\leq \eta<b_i$ we have
\begin{equation}\label{b3}
|u(x)-u(\eta)|\,\astar(\eta)
\leq  \int_{x}^\eta|u'(y)| \A(y)\,\de y\,;
\end{equation}
\begin{equation}\label{b4}
|u(\eta)|\astar(\eta)\leq |u(x)|\astar(\eta)+ \int_{x}^{b_i}|u'(y)|\A(y)\,\de y\,.
\end{equation}
%where $c_{i}$ is the constant coming from \eqref{localc}.
\end{prop}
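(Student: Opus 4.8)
The plan is to derive all four inequalities from the fundamental theorem of calculus, applied to the absolutely continuous representative of $u\in W^{1,1}_{\rm loc}(I_{\Omega,\A})$, together with the monotonicity of the essential supremum of $\A^{-1}$ under inclusion of intervals. I would prove \eqref{b1} first, deduce \eqref{b2} from it by the triangle inequality, and then obtain \eqref{b3}--\eqref{b4} by the symmetric argument on the right half of $(a_i,b_i)$.

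For \eqref{b1}, fix $a_i<\eta\le x\le\frac{a_i+b_i}{2}$. Since $[\eta,x]\Subset(a_i,b_i)\subset I_{\Omega,\A}$, $\A$ is finite a.e.\ and \eqref{localc} gives $0<\norm{\A^{-1}}_{L^\infty((\eta,x))}<\infty$, so $u$ is absolutely continuous on $[\eta,x]$ and
\[
|u(x)-u(\eta)|\le\int_\eta^x|u'(y)|\,\de y=\int_\eta^x|u'(y)|\A(y)\,\A(y)^{-1}\,\de y\le\norm{\A^{-1}}_{L^\infty((\eta,x))}\int_\eta^x|u'(y)|\A(y)\,\de y .
\]
It then remains to check that $\norm{\A^{-1}}_{L^\infty((\eta,x))}\le\astar(\eta)^{-1}$, which is a short case distinction on the location of $\eta$: if $a_i<\eta\le\frac{3a_i+b_i}{4}$ then $(\eta,x)\subset(\eta,\frac{a_i+b_i}{2})$, hence by monotonicity of the essential supremum and the definition of $\astar$ we get $\norm{\A^{-1}}_{L^\infty((\eta,x))}\le\norm{\A^{-1}}_{L^\infty((\eta,\frac{a_i+b_i}{2}))}=\astar(\eta)^{-1}$; if instead $\frac{3a_i+b_i}{4}\le\eta\le x\le\frac{a_i+b_i}{2}$, then, since $\frac{a_i+b_i}{2}\le\frac{a_i+3b_i}{4}$, we have $(\eta,x)\subset(\frac{3a_i+b_i}{4},\frac{a_i+3b_i}{4})$ and again $\norm{\A^{-1}}_{L^\infty((\eta,x))}\le\norm{\A^{-1}}_{L^\infty((\frac{3a_i+b_i}{4},\frac{a_i+3b_i}{4}))}=\astar(\eta)^{-1}$. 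Multiplying the previous chain through by $\astar(\eta)$ yields \eqref{b1}.

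Once \eqref{b1} is available, \eqref{b2} follows at once from $|u(\eta)|\le|u(x)|+|u(x)-u(\eta)|$ after multiplying by $\astar(\eta)$ and enlarging the domain of integration from $(\eta,x)$ to $(a_i,x)$, which is legitimate since the integrand is nonnegative and $a_i<\eta$. For \eqref{b3}, I would repeat the computation behind \eqref{b1} with $\frac{a_i+b_i}{2}\le x\le\eta<b_i$, bounding $|u(x)-u(\eta)|\le\norm{\A^{-1}}_{L^\infty((x,\eta))}\int_x^\eta|u'(y)|\A(y)\,\de y$, and check $\norm{\A^{-1}}_{L^\infty((x,\eta))}\le\astar(\eta)^{-1}$ by distinguishing $\eta\ge\frac{a_i+3b_i}{4}$ (so $(x,\eta)\subset(\frac{a_i+b_i}{2},\eta)$) from $\eta\le\frac{a_i+3b_i}{4}$ (so $(x,\eta)\subset(\frac{3a_i+b_i}{4},\frac{a_i+3b_i}{4})$, using $\frac{a_i+b_i}{2}>\frac{3a_i+b_i}{4}$); then \eqref{b4} follows from \eqref{b3} and the triangle inequality exactly as \eqref{b2} does from \eqref{b1}. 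I do not expect any genuine obstacle here: the only delicate point is correctly matching the interval on which $\A^{-1}$ is essentially bounded with the interval defining $\astar(\eta)$, and this is precisely what the elementary case distinctions above take care of; the rest is just the fundamental theorem of calculus and monotonicity of the essential supremum.
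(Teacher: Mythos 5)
Your proposal is correct and follows essentially the same route as the paper: the fundamental theorem of calculus on $[\eta,x]$, the factorization $|u'| = |u'|\,\A\cdot\A^{-1}$, and pulling out the essential supremum of $\A^{-1}$, with \eqref{b2} and \eqref{b4} obtained from \eqref{b1} and \eqref{b3} via the triangle inequality and monotonicity of the integral. The only difference is that you spell out the case distinction matching $\norm{\A^{-1}}_{L^\infty((\eta,x))}$ to the piecewise definition of $\astar(\eta)$, a detail the paper compresses into ``from the above inequality, we may deduce \eqref{b1}''; your version is, if anything, slightly more complete.
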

\begin{oss}\label{bb5}
By \eqref{b4} we have $u\astar\in L^\infty(({{a_i+b_i}\over{2}},b_i))$. Indeed, for every $\eta$ such that ${{a_i+b_i}\over{2}}\leq \eta<b_i$
\begin{equation}\label{b5}
|u(\eta)|\astar(\eta)\leq \left|u\left({{a_i+b_i}\over{2}}\right)\right|L_i+ \int_{{{a_i+b_i}\over{2}}}^{b_i}|u'(y)|\A(y)\,\de y<+\infty\,.
\end{equation}
\end{oss}

\begin{proof}
Fix $1\leq i\leq N_{w}$. Let us consider the open set $(\eta,x)\subset (a_i,\frac{a_i+b_i}2)$. We have that
\begin{align*}
\left\vert u(x)-u(\eta)\right\vert &\leq \left\vert \int_{\eta}^{x}u'(y)\de y\right\vert
%\\
%&
=\left\vert \int_{\eta}^{x}u'(y)\A(y)\frac1{ \A(y)}\de y\right\vert
%\\
%&
\leq  \int_{\eta}^{x}\left\vert u'(y)\right\vert \A(y) \frac{1}{\A(y)}\de y.
\end{align*}
%where we have used \eqref{localc}. 
Taking the $\sup$ to $\frac{1}{ \A(y)}$ we obtain 
\begin{align*}
\begin{aligned}
\left\vert u(x)-u(\eta)\right\vert&\leq \int_{\eta}^{x}\left\vert  u'(y)\right\vert \A(y)\de y
\sup_{y\in \left(\eta, \frac{a_{i}+b_{i}}{2}\right)}\frac{1}{  \A(y)}.\\
%&\leq  c_{1}\int_{\eta}^{x}\left\vert  u'(y)\right\vert \A(y)\de y
%\sup_{y\in \left(\eta, \frac{a_{i}+b_{i}}{2}\right)}\left\vert \frac{1}{\A(y)}\right\vert
\end{aligned}
\end{align*}
%where in the last line we have used \eqref{localc}. 
From the above inequality, we may deduce \eqref{b1}. Further, since
\begin{equation*}
|u(\eta)|\leq|u(x)|+|u(\eta)-u(x)|\,,
\end{equation*}
by \eqref{b1}, \eqref{b2} follows. Similarly, \eqref{b3} and (\ref{b4}) can be obtained.
\end{proof}
\begin{teo}[Poincar\'e type inequality on ${\rm{Dom}}_{\A}$]\label{poincare1}
Suppose that ${\rm (H1)-(H2)}$ hold true, and $\A\in {\rm BV}_{\rm loc}(I_{\Omega,\A})$. Then for every $u\in {\rm{Dom}}_{\A}$
\begin{equation*}\label{poincareformula2}
\sum_{i=1}^{N_{\A}}\medint_{a_i}^{b_i}
\left|u(\eta)-u\left(\frac{a_i+b_i}{2}\right)\right|\astar(\eta)\,d\eta\leq \int_{I_{\Omega,\A}}|u'(y)| \A(y)\,\de y.
 %\left\vert (\A,Du)\right\vert(I_{\Omega,\A}).
\end{equation*}
\end{teo}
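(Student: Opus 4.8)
The plan is to obtain the result on each interval $(a_i,b_i)$ separately and then sum over $i=1,\dots,N_\A$, exactly as the statement is phrased. Fix $i$ and write $c_i \coloneqq \frac{a_i+b_i}{2}$. The strategy is to integrate the pointwise estimates \eqref{b1} and \eqref{b3} (with $x$ replaced by the midpoint $c_i$) against $d\eta$ over the two halves of $(a_i,b_i)$, and to control the resulting right-hand sides by the single integral $\int_{a_i}^{b_i}|u'|\A\,\de y$.

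More precisely, I would first treat the left half $(a_i,c_i)$. For $a_i<\eta\le c_i$, estimate \eqref{b1} with $x=c_i$ gives
\begin{equation*}
\left|u(\eta)-u(c_i)\right|\astar(\eta)\le \int_{\eta}^{c_i}|u'(y)|\A(y)\,\de y\le \int_{a_i}^{c_i}|u'(y)|\A(y)\,\de y.
\end{equation*}
Integrating in $\eta$ over $(a_i,c_i)$ and dividing by $b_i-a_i$ (the length over which the average $\medint$ is taken) yields
\begin{equation*}
\frac{1}{b_i-a_i}\int_{a_i}^{c_i}\left|u(\eta)-u(c_i)\right|\astar(\eta)\,\de\eta\le \frac{c_i-a_i}{b_i-a_i}\int_{a_i}^{c_i}|u'(y)|\A(y)\,\de y=\frac12\int_{a_i}^{c_i}|u'(y)|\A(y)\,\de y,
\end{equation*}
using $c_i-a_i=\tfrac12(b_i-a_i)$. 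Symmetrically, integrating \eqref{b3} with $x=c_i$ over the right half $(c_i,b_i)$ gives $\frac{1}{b_i-a_i}\int_{c_i}^{b_i}|u(\eta)-u(c_i)|\astar(\eta)\,\de\eta\le \frac12\int_{c_i}^{b_i}|u'(y)|\A(y)\,\de y$. Adding the two halves produces $\medint_{a_i}^{b_i}|u(\eta)-u(c_i)|\astar(\eta)\,\de\eta\le \int_{a_i}^{b_i}|u'(y)|\A(y)\,\de y$.

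Finally, I would sum this over $i=1,\dots,N_\A$; since the intervals $(a_i,b_i)$ are pairwise disjoint and contained in $I_{\Omega,\A}$, the right-hand sides add up to at most $\int_{I_{\Omega,\A}}|u'(y)|\A(y)\,\de y$, which is the claimed inequality. A couple of points deserve care rather than being genuine obstacles: one must check measurability and finiteness of the integrands $|u(\eta)-u(c_i)|\astar(\eta)$, which follows since $u\in W^{1,1}_{\mathrm{loc}}(I_{\Omega,\A})$ is continuous on each $(a_i,b_i)$, $\astar\in L^\infty_{\mathrm{loc}}(I_{\Omega,\A})$ (Proposition \ref{dualpropw}), and Remark \ref{bb5} together with its left-half analogue gives $u\astar\in L^\infty((a_i,b_i))$ whenever $u\in{\rm Dom}_\A$; and in the case $N_\A=+\infty$ one should note the final inequality holds with the series on the left interpreted as a supremum of partial sums, each bounded by the same right-hand side. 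The only mild subtlety — really the "hard part" — is simply lining up the averaging constant $\frac{1}{b_i-a_i}$ in $\medint_{a_i}^{b_i}$ with the half-interval lengths so that the factor $\frac12$ from each half recombines to give exactly the stated constant $1$; everything else is a direct application of the preceding proposition.
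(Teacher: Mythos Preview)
Your proof is correct and follows essentially the same approach as the paper: apply \eqref{b1} and \eqref{b3} with $x=\tfrac{a_i+b_i}{2}$, integrate in $\eta$ over each half-interval, add, divide by $b_i-a_i$, and sum in $i$. A minor remark: your arithmetic in fact yields the sharper bound $\medint_{a_i}^{b_i}|u(\eta)-u(c_i)|\astar(\eta)\,\de\eta\le \tfrac12\int_{a_i}^{b_i}|u'|\A\,\de y$ (so the factors $\tfrac12$ do not ``recombine to $1$'' but simply give a constant $\tfrac12\le 1$), exactly as in the paper, which also relaxes $\tfrac{b_i-a_i}{2}$ to $b_i-a_i$ before dividing.
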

\begin{oss}
Let recall that since $u\in {\rm{Dom}}_{\A}$, we need that $\A\in {\rm BV}_{\rm loc}(I_{\Omega,\A})$. 
However, the regularity of $\A$ does not play any role in the proof of Theorem \ref{poincare1}, but it is necessary to define the ambient space ${\rm BV}_{\rm loc}^{\A}$.
Further, let us point out that the hypotheses of Theorem \ref{poincare1}, and \cite[Theorem 2.10]{CCMH} are different. 
Indeed, while the case $1<p<+\infty$ requires a local sommability of $w^{-\frac{1}{p-1}}$, the case $p=1$ requires the local boundedness of $\frac{1}{\A}$. The results of both Theorems are formally analogous, but the auxiliary weights are different, and have different properties. 
Let us also note that we do not assume any local growth condition, as in Theorem \ref{baldipoincarepesata} below, where a weighted Poincaré inequality with a single weight is proved.
\end{oss}

\proof
Fix $1\leq i\leq N_{w}$. In \eqref{b1} we take $x=\frac{a_i+b_i}{2}$, then
\begin{equation*}
\left|u(\eta)-u\left(\frac{a_i+b_i}{2}\right)\right|\astar(\eta)\leq
\int_{a_i}^{\frac{a_i+b_i}{2}}|u'(y)| \A(y)\,\de y.
\end{equation*}
By integrating with respect to $\eta$ we obtain
\begin{equation*}
\int_{a_i}^{\frac{a_i+b_i}{2}}
\left|u(\eta)-u\left(\frac{a_i+b_i}{2}\right)\right|\astar(\eta)\,\de\eta\leq\frac{b_i-a_i}{2}
\int_{a_i}^{\frac{a_i+b_i}{2}}|u'(y)| \A(y)\,\de y.
\end{equation*}
%where $C_{\A}=\sup\{c_{i}: i=1,\ldots, N_{\A}\}<+\infty$.
Similarly we have
\begin{equation*}
\int_{\frac{a_i+b_i}{2}}^{b_i}
\left|u(\eta)-u\left(\frac{a_i+b_i}{2}\right)\right|\astar(\eta)\,\de \eta\leq\frac{b_i-a_i}{2}
\int_{\frac{a_i+b_i}{2}}^{b_i}|u'(y)| \A(y)\,\de y.
\end{equation*}
Therefore
\begin{equation*}
\int_{a_i}^{b_i}
\left|u(\eta)-u\left(\frac{a_i+b_i}{2}\right)\right|\astar(\eta)\,\de \eta\leq (b_i-a_i)
\int_{a_i}^{b_i}|u'(y)|\A(y)\,\de y.
\end{equation*}
Hence
\begin{equation*}
\medint_{a_i}^{b_i}
\left|u(\eta)-u\left(\frac{a_i+b_i}{2}\right)\right| \astar(\eta)\,\de\eta\leq
\int_{a_i}^{b_i}|u'(y)| \A(y)\,\de y.
\end{equation*}
The conclusion follows since $u\in {\rm{Dom}}_{\A}$ and so
\begin{equation*}
\sum_{i=1}^{N_{\A}}
\int_{a_i}^{b_i}|u'(y)| \A(y)\,\de y=\int_{I_{\Omega,\A}}|u'(y)| \A(y)\,\de y<+\infty.
\end{equation*}\qed

%\begin{cor}\label{usePo}
%Suppose that  {\rm (H1)-(H2)} hold true, and $\A\in {\rm BV}_{\rm loc}(I_{\Omega,\A})$.
%%$N_{\A}<+\infty$. 
%Then the embedding ${\rm BV}^{\A}(I_{\Omega,\A}) \frec L^{1}(I_{\Omega,\A},\astar)$ is continuous, and for every $u\in {\rm BV}^{\A}(I_{\Omega,\A})$
%\begin{equation}\label{poincareformula2}
%\sum_{i=1}^{N_{\A}}\medint_{a_i}^{b_i}
%\left|u(\eta)-u\left(\frac{a_i+b_i}{2}\right)\right|\astar(\eta)\,d\eta\leq
%%\int_{I_{\Omega,\A}}|u'(y)| \A(y)\,\de y.
% \left\vert (\A,Du)\right\vert(I_{\Omega,\A}).
%\end{equation}
%\end{cor}
We also have the following convergence result (see Proposition 9.3 in \cite{COM} for an analogous result).

\begin{prop}\label{proppi}
Suppose that ${\rm (H1)-(H2)}$ hold true, let $\A\in {\rm BV}_{{\rm loc}}(I_{\Omega,\A})$ and let $(u_{k})\subset {\rm BV}_{\rm loc}^{\A}(I_{\Omega,\A})$ be a sequence of functions such that \begin{align}\label{hypo1}
\sup_{k\in \bbN}\left\vert(\A,Du_{k})\right\vert(I_{\Omega,\A})<+\infty, \quad \sup_{k \in \bbN}u_k\left(\frac{a_i+b_i}2\right)<+\infty
\end{align}
for any $i=1,\ldots,N_{\A}$. Then for any interval $K\Subset (a_{i},b_{i})$, with $i=1,\ldots,N_{\A}$,
there exists $u\in L^{1}(K,\astar)\cap W^{1,1}(K)$ 
and a subsequence $(u_{k_{j}})$ such that $u_{k_{j}}\rightarrow u$ in $L^{1}(K, \astar)$.

Moreover, if the sequence $(u_k)_{k \in \bbN}$ is uniformly bounded also in $L^\infty(I_{\Omega,\A})$, i.e.
\begin{align}\label{hypo11111}
\sup_{k\in \bbN}
\left\| u_{k}\right\|_{L^\infty(I_{\Omega,\A})}
+\left\vert(\A,Du_{k})\right\vert(I_{\Omega,\A})<+\infty,
\end{align}
then $u\in {\rm BV}_{\rm loc}^{\A}(I_{\Omega,\A})$.
\end{prop}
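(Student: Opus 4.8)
The plan is to proceed in two stages, first establishing the local compactness in $L^1(K,\astar)\cap W^{1,1}(K)$, and then upgrading to the $\BV^{\A}_{\rm loc}$ conclusion under the additional $L^\infty$-bound. Fix $i\in\{1,\dots,N_{\A}\}$ and an interval $K\Subset(a_i,b_i)$. On such a $K$ the weight $\A$ is bounded below by a positive constant (by \eqref{localc}), so the bound $\sup_k|(\A,Du_k)|(I_{\Omega,\A})<+\infty$ together with the computation $|(\A,Du_k)|(K)=\int_K|u_k'|\A\,\de x$ valid on ${\rm{Dom}}_{\A}$ gives a uniform bound on $\int_K|u_k'|\,\de x$, i.e. $\sup_k\|u_k'\|_{L^1(K)}<+\infty$. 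To control the functions themselves and not just their derivatives, I would use the pointwise estimates \eqref{b1}--\eqref{b4}: writing $c=\frac{a_i+b_i}{2}$, for $x\in K$ one has $|u_k(x)|\astar(x)\le |u_k(c)|\astar(x)+\int_{a_i}^{b_i}|u_k'|\A\,\de y$, and since on $K$ one has $\astar\ge M_{i,K}>0$ by \eqref{stima2} and $\astar\le L_i<\infty$ by \eqref{stima1}, the second hypothesis in \eqref{hypo1} yields $\sup_k\|u_k\|_{L^\infty(K)}<+\infty$. Hence $(u_k)$ is bounded in $W^{1,1}(K)$.

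Next I would invoke the compact embedding $W^{1,1}(K)\hookrightarrow\hookrightarrow L^1(K)$ (Rellich--Kondrachov in one dimension, equivalently Helly together with the $L^\infty$ bound): there is a subsequence $(u_{k_j})$ and a limit $u\in W^{1,1}(K)$ with $u_{k_j}\to u$ in $L^1(K)$ and $u_{k_j}'\weak u'$ weakly in $L^1(K)$ or at least weakly-$*$ as measures; by lower semicontinuity $\int_K|u'|\A\,\de x<+\infty$. Since $\astar\in L^\infty(K)$ (it is bounded on $K$ by $L_i$, using \eqref{stima1} or Proposition \ref{dualpropw}(iv)), convergence in $L^1(K)$ implies convergence in $L^1(K,\astar)$: $\int_K|u_{k_j}-u|\astar\,\de x\le L_i\int_K|u_{k_j}-u|\,\de x\to 0$. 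A standard diagonal argument over an exhausting sequence of compacta $K_m\Subset(a_i,b_i)$, and then over $i=1,\dots,N_{\A}$ (finite or countable), produces a single subsequence and a function $u\in W^{1,1}_{\rm loc}(I_{\Omega,\A})$ with $u_{k_j}\to u$ in $L^1_{\rm loc}$ against $\astar$ on each such $K$; this gives the first assertion.

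For the second assertion, assume \eqref{hypo11111}, so $\sup_k\|u_k\|_{L^\infty(I_{\Omega,\A})}<+\infty$. First I need $u\in L^1_{\rm loc}(I_{\Omega,\A},\A)\cap L^1_{\rm loc}(I_{\Omega,\A},|D\A|)$: the $L^\infty$-bound passes to the $L^1_{\rm loc}$ limit $u$, so $u\in L^\infty(I_{\Omega,\A})$, and since $\A\in L^1_{\rm loc}$ and $\A\in\BV_{\rm loc}(I_{\Omega,\A})$ (so $|D\A|$ is a locally finite measure), boundedness of $u$ gives both memberships. Then I must show the pairing $(\A,Du)$ is a Radon measure on $I_{\Omega,\A}$ with locally finite total variation. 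I would pass to the limit in the distributional definition: for $\varphi\in C_c^\infty(I_{\Omega,\A})$,
\begin{align*}
\nm{(\A,Du_{k_j}),\varphi}=-\int u_{k_j}\varphi\,\de D\A-\int u_{k_j}\varphi'\A\,\de x\longrightarrow -\int u\varphi\,\de D\A-\int u\varphi'\A\,\de x=\nm{(\A,Du),\varphi},
\end{align*}
using $u_{k_j}\to u$ in $L^1_{\rm loc}$ and the uniform $L^\infty$-bound together with dominated convergence against the finite measures $|D\A|\mres\supp\varphi$ and $\A\,\de x\mres\supp\varphi$. Lower semicontinuity of total variation under this convergence, combined with the uniform bound $\sup_k|(\A,Du_k)|(I_{\Omega,\A})<+\infty$ in \eqref{hypo11111}, gives $|(\A,Du)|(I_{\Omega,\A})<+\infty$, hence $u\in\BV^{\A}_{\rm loc}(I_{\Omega,\A})$.

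The main obstacle I anticipate is the case $N_{\A}=+\infty$: the constants $M_{i,K}$, $L_i$ and $c_{i,K}$ all degenerate as $i\to\infty$, so one genuinely cannot hope for compactness in any global weighted $L^1$ space, only the local statement on each fixed $(a_i,b_i)$ — which is exactly what is claimed. Care is therefore needed to (i) run the diagonal extraction so that the \emph{same} subsequence works simultaneously on all $(a_i,b_i)$, which is fine since there are at most countably many intervals and countably many exhausting compacta, and (ii) note that the limit $u$ need not lie in any global space; in the first part one only asserts $u\in L^1(K,\astar)\cap W^{1,1}(K)$ for each such $K$, and in the second part the global $L^\infty$-bound is precisely what makes $u\in L^1_{\rm loc}(\A)\cap L^1_{\rm loc}(|D\A|)$ and lets the pairing pass to the limit on each compactly supported test function.
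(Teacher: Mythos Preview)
Your proposal is correct and follows essentially the same approach as the paper: bound $(u_k)$ in $W^{1,1}(K)$ using that $\A$ (hence $\astar$) is bounded above and below on $K$, apply the compact embedding into $L^1(K)$, and upgrade to $L^1(K,\astar)$ via $\astar\le L_i$. The only differences are cosmetic---the paper obtains the $L^1(K)$ bound on $u_k$ by invoking the integrated Poincar\'e inequality (Theorem~\ref{poincare1}) rather than the pointwise estimates \eqref{b2}--\eqref{b4} directly, and for the second assertion it simply appeals to the argument of \cite[Proposition~9.3]{COM} instead of writing out, as you do, the passage to the limit in the distributional definition of the pairing.
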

\begin{proof}
A first consequence of Theorem \ref{poincare1}  and  \eqref{hypo1} is that 
\begin{align*}
\sup_{k\in \bbN} \norm{u_{k}}_{L^{1}(K,\astar)} <+\infty.
\end{align*}
By \eqref{stima2}, we can find a positive constant $M_{i,K}>0$ such that $\astar(x)>M_{i,K}$ for ${\rm a.e.}$ $x\in K$. Then 
\begin{align*}
M_{i,K}\sup_{k\in\bbN}\int_{K}\vert u_{k}\vert \de x \leq 
\sup_{k\in\bbN}\int_{K}\vert u_{k}\vert \astar \de x <+\infty.
\end{align*}
Moreover, since $\hat w\leq w$
\begin{align*}
M_{i,K}\sup_{k\in\bbN}\int_{K}\vert u_{k}^{\prime}\vert \de x &\leq 
\sup_{k\in\bbN}\int_{K}\vert u_{k}^{\prime}\vert \astar \de x\leq \sup_{k\in\bbN}\int_{I_{\Omega,\A}}\vert u_{k}^{\prime}\vert \astar \de x\leq\\
&\leq\sup_{k\in\bbN}\int_{I_{\Omega,\A}}\vert u_{k}^{\prime}\vert \A \de x\leq \sup_{k\in\bbN} \left\vert(\A,Du_{k})\right\vert(I_{\Omega,\A})<+\infty.
\end{align*}
Then, $(u_{k})_{k}$ is bounded in  $W^{1,1}(K)$. By \cite[Theorem 8.8, Remark 10]{Bre} we can extract a subsequence still denoted $(u_{k})_{k}$, and find $u\in W^{1,1}(K)$ such that 
\begin{align*}
\norm{u_{k}-u}_{L^{1}(K)}\rightarrow 0, \hskip 0,1cm \text{as $k\rightarrow +\infty$.}
\end{align*}
Furthermore, by \eqref{stima1} we can find a constant $L_{i}>0$ such that
\begin{align*}
\int_{K}\vert u_{k}-u\vert \astar \de x \leq L_{i}\int_{K}\vert u_{k}- u\vert\de x\rightarrow 0\hskip0,1cm \text{as $k\rightarrow +\infty$,}  
\end{align*}
and thus we have proved that
\begin{align*}
\norm{u_{k}-u}_{L^{1}(K,\astar)}\rightarrow 0, \hskip 0,1cm \text{as $k\rightarrow +\infty$.}
\end{align*}
Finally, if the sequence $(u_k)_{k \in \bbN}$ is uniformly bounded in $L^\infty(I_{\Omega,\A})$, as in the proof of Proposition 9.3 in \cite{COM} we have that $\|u\|_{L^\infty(K)} \le C$, and this implies that $u \in L^1(K, |D \A|)$. Hence  $u \in BV^{\A}_{\rm{loc}}(I_{\Omega,\A})$.
\end{proof}
\begin{cor} Under the assumptions of Proposition \ref{proppi},
if $N_{\A}<+\infty$, then there exists $u\in L^{1}(I_{\Omega,\A},\astar)$ and a subsequence $(u_{k_{j}})$ such that $u_{k_{j}}\rightarrow u$ in $L^{1}(I_{\Omega,\A}, \astar)$. If \eqref{hypo11111} holds, then $u\in {\rm BV}^{\A}(I_{\Omega,\A})$.
\end{cor}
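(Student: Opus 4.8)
The Corollary is the global counterpart of Proposition~\ref{proppi}, and the only new ingredient is controlling the sequence near the (finitely many) endpoints $a_i,b_i$. First I would use that $N_{\A}<+\infty$ means $I_{\Omega,\A}=\bigcup_{i=1}^{N_{\A}}(a_i,b_i)$ is a \emph{finite} disjoint union of intervals. For each $i$ fix compact intervals $K_1^i\subset K_2^i\subset\cdots$ with $K_n^i\Subset(a_i,b_i)$ and $\bigcup_n K_n^i=(a_i,b_i)$; applying Proposition~\ref{proppi} successively on the $K_n^i$ and extracting a diagonal subsequence — legitimate precisely because there are only finitely many indices $i$ — produces a single subsequence $(u_{k_j})$ and a function $u$ with $u\in W^{1,1}(K)\cap L^1(K,\astar)$ and $u_{k_j}\to u$ in $L^1(K,\astar)$ for every $K\Subset(a_i,b_i)$ and every $i$; the limit $u$ is well defined on all of $I_{\Omega,\A}$ by uniqueness of $L^1$-limits on nested compacts.

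Next I would show $u\in L^1(I_{\Omega,\A},\astar)$. From the intermediate estimate in the proof of Theorem~\ref{poincare1}, $\int_{a_i}^{b_i}|u_k(\eta)-u_k(\tfrac{a_i+b_i}{2})|\,\astar(\eta)\,\de\eta\le (b_i-a_i)\int_{a_i}^{b_i}|u_k'|\A\,\de y$; summing over $i$, using $\sum_i(b_i-a_i)\le|\Omega|$, the bounds in \eqref{hypo1} (which, as in the proof of Proposition~\ref{proppi}, also control $|u_k(\tfrac{a_i+b_i}{2})|$ via the $W^{1,1}$-bound on a central compact), and $0\le\astar\le c$ on $\Omega$ from Proposition~\ref{dualpropw}(iii), one gets $\sup_k\|u_k\|_{L^1(I_{\Omega,\A},\astar)}<+\infty$. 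Letting $j\to\infty$ on each $K_n^i$ and then $n\to\infty$ by monotone convergence yields $\|u\|_{L^1((a_i,b_i),\astar)}\le\sup_k\|u_k\|_{L^1(I_{\Omega,\A},\astar)}$, and summing over the finitely many $i$ gives $u\in L^1(I_{\Omega,\A},\astar)$.

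Then I would upgrade the convergence to all of $I_{\Omega,\A}$. Taking $x=\tfrac{a_i+b_i}{2}$ in \eqref{b2}, bounding $\int_{a_i}^{(a_i+b_i)/2}|u_k'|\A\,\de y\le\sup_k|(\A,Du_k)|(I_{\Omega,\A})$, $|u_k(\tfrac{a_i+b_i}{2})|\le C$ and $\astar\le c$, gives, for small $\delta>0$, $\sup_k\int_{a_i}^{a_i+\delta}|u_k|\astar\,\de x\le C'\delta$, and symmetrically on $(b_i-\delta,b_i)$ via \eqref{b4}. Splitting each $(a_i,b_i)$ into these two short endpoint pieces and the central compact $[a_i+\delta,b_i-\delta]$, choosing $\delta$ so the $2N_{\A}$ endpoint contributions (for the $u_{k_j}$ and, by absolute continuity of $\eta\mapsto\int|u|\astar$, for $u$) are each $<\varepsilon$ uniformly in $j$, and then using the $L^1(\astar)$-convergence of the first step on the central compacts for $j$ large, yields $u_{k_j}\to u$ in $L^1(I_{\Omega,\A},\astar)$. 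If moreover \eqref{hypo11111} holds, Proposition~\ref{proppi} gives $u\in\BV^{\A}_{\rm loc}(I_{\Omega,\A})$ together with $\|u\|_{L^\infty(K)}\le C$ uniformly in $K$, hence $u\in L^\infty(I_{\Omega,\A})$; combined with the lower semicontinuity $|(\A,Du)|(I_{\Omega,\A})\le\liminf_j|(\A,Du_{k_j})|(I_{\Omega,\A})<+\infty$ (as in \cite{COM}) and the integrability of $u$ against $\A$ and $|D\A|$ coming from this $L^\infty$-bound, one concludes $u\in\BV^{\A}(I_{\Omega,\A})$.

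The only step that goes genuinely beyond Proposition~\ref{proppi} is the passage from local to global $L^1(\astar)$-convergence, i.e.\ handling the endpoints $a_i,b_i$, where $\astar$ may vanish and no pointwise bound on $u_k$ is at hand; the uniform tightness estimate above, resting on \eqref{b2}--\eqref{b4} and on the boundedness of $\astar$ in the finitely degenerate regime (Proposition~\ref{dualpropw}(iii)), is exactly what makes this work.
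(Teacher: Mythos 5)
Your proof is correct, and it is in fact considerably more complete than the paper's own argument. The paper disposes of the Corollary in one line --- ``use $M=\min\{M_1,\dots,M_{N_{\A}}\}$ and $L=\min\{L_1,\dots,L_{N_{\A}}\}$'' --- i.e.\ it simply replaces the interval-wise constants of Proposition \ref{dualpropw} by uniform ones, which is legitimate only because $N_{\A}<+\infty$. Taken literally, that argument yields $W^{1,1}$-compactness and $L^1(\astar)$-convergence only on finite unions of compacts $K_i\Subset(a_i,b_i)$, since $M_{i,K}$ degenerates as $K$ invades $(a_i,b_i)$ whenever $\astar$ vanishes at an endpoint; the passage to convergence on all of $I_{\Omega,\A}$ is left implicit. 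Your proof identifies exactly this as the one genuinely new point and settles it with the uniform tightness estimate $\sup_k\int_{a_i}^{a_i+\delta}|u_k|\,\astar\,\de x\le C'\delta$ obtained from \eqref{b2}--\eqref{b4} together with the bounds in \eqref{hypo1} and $\astar\le c$ (Proposition \ref{dualpropw}(iii)), combined with absolute continuity of $\int|u|\,\astar$ for the limit; the diagonal extraction over the finitely many intervals and the Fatou/monotone-convergence step giving $u\in L^1(I_{\Omega,\A},\astar)$ are also carried out correctly, as is the final $\BV^{\A}$ membership via the $L^\infty$ bound and the lower semicontinuity of the pairing. Two small remarks: the paper's $L=\min\{L_1,\dots,L_{N_\A}\}$ is evidently a typo for $\max$, consistent with your use of the global bound $\astar\le c$; and your step $u\in L^1(I_{\Omega,\A},\A)\cap L^1(I_{\Omega,\A},|D\A|)$ from the $L^\infty$ bound implicitly uses $\A\in L^1(I_{\Omega,\A})$ and $|D\A|(I_{\Omega,\A})<+\infty$, which the standing hypotheses only give locally --- but this is a gap in the statement shared by the paper's own treatment, not a defect introduced by your argument.
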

\begin{proof}
It suffices to use $M=\min\{M_1,\dots,M_{N_{\A}}\}$ and $L=\min\{L_1,\dots,L_{N_{\A}}\}$, instead of $M_i$ and $L_i$, respectively.
\end{proof}

%\begin{oss}
%Let us note that in this Corollary, we obtain a global estimate in the sense that we do not restrict ourselves to compact sets, as in the following Proposition.
%\end{oss}

%%%%%%%%%%%%%%% SECTION 5
\section{Relaxation for finitely degenerate weights}\label{sec:relax1}
In this section, in addition to hypothesis (H1), (H2) introduced in the previous section, we also make the following assumption on the weight $\A$. 
\begin{itemize}
\item[(H3)] $\A\in W_{\rm loc}^{1,1}(I_{\Omega,\A})$;
\item[(H4)]$1\leq N_{w}<+\infty$.
\end{itemize}

\begin{oss}
We note that, as proven in \cite[Proposition 5.1 (3)]{COM}, under (H3), the space ${\rm BV}^{\A}(I_{\Omega,\A})$ is a Banach space, 
with the norm 
\begin{equation}
||u||_{{\rm BV}^{\A}(I_{\Omega,\A})}:=||u||_{L^1(I_{\Omega,\A},w)}+||u||_{L^1(I_{\Omega,\A}, |Dw|)}+|D(uw)|(I_{\Omega,\A})
\end{equation}
which is not generally the case.
\end{oss}

\subsection{The choice of the ambient space $X$ and the convergence}
Notice that, by the Poincar\'e inequality in Theorem \ref{poincare1}, we have  ${\rm{Dom}}_{\A}\subset L^{1}(\Omega,\hat{\A})$.  In what follows, we set $X=L^{1}(\Omega,\astar)$ and we define the $(\astar,D\A)$-convergence, as follows: 
\begin{defi}\label{ourconv}
We say that a sequence $(u_{n})_{n\in \bbN}\subset {\rm BV}_{{\rm loc}}^{\A}(\Omega)$ $(\astar,D\A)$-converges  to $u\in {\rm BV}_{{\rm loc}}^{\A}(\Omega)$ if
\begin{itemize}
\item[(i)] $u_{n}\weak u$ in $L_{\rm loc}^{1}(I_{\Omega,\A},\astar)$,
\item[(ii)] $u_{n}\weak u$ in $L_{{\rm loc}}^{1}(I_{\Omega,\A},\vert{D}\A\vert)$.
\end{itemize}
\end{defi}
\begin{oss}
This new convergence is a modification of the one introduced in \cite{COM} and guarantees the lower semicontinuity of the pairing functional (see Step 2 in the proof of Theorem \ref{maindegen}).
\end{oss}

\begin{prop}\label{domwBanach}
Suppose that assumptions ${\rm (H1)-(H3)}$ hold true. Then ${\rm{Dom}}_{\A}$ defined as in \eqref{bvspa} is a Banach space endowed with the norm
\begin{align}\label{normavol}
\norm{u}_{{\rm{Dom}}_{\A}}\coloneqq \norm{u}_{L^{1}(I_{\Omega,\A},\astar)}+ \left\vert(\A,Du)\right\vert(I_{\Omega,\A}).
\end{align}
Furthermore, the convergence in \eqref{normavol} implies the $(\astar,D\A)$-convergence.
\end{prop}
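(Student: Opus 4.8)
The plan is to prove that $\mathrm{Dom}_\A$ is complete under the norm $\norm{u}_{\mathrm{Dom}_\A}$ by a standard Cauchy-sequence argument, using the Poincaré inequality (Theorem \ref{poincare1}) to get local $W^{1,1}$-bounds, and then to check that the $\mathrm{Dom}_\A$-limit is genuinely in the space. First I would take a Cauchy sequence $(u_n)$ in $\mathrm{Dom}_\A$. Since $N_\A<\infty$ is \emph{not} assumed here (only (H1)--(H3)), I work interval by interval: fix $i$ and $K\Subset(a_i,b_i)$. On $K$ we have $\astar\ge M_{i,K}>0$ by \eqref{stima2}, so $(u_n)$ is Cauchy in $L^1(K)$; and since $\astar\le\A$ together with $|(\A,Du_n)|(I_{\Omega,\A})=\int_{I_{\Omega,\A}}|u_n'|\A\,\de x$ (because $u_n\in W^{1,1}_{\mathrm{loc}}$), the difference $u_n-u_m$ satisfies $\int_K|u_n'-u_m'|\,\de x\le M_{i,K}^{-1}\int_{I_{\Omega,\A}}|u_n'-u_m'|\A\,\de x$, which is Cauchy; hence $(u_n)$ is Cauchy in $W^{1,1}(K)$ and converges to some $u\in W^{1,1}(K)$. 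A diagonal argument over an exhaustion of each $(a_i,b_i)$ by compacts, and over $i$, produces a single $u\in W^{1,1}_{\mathrm{loc}}(I_{\Omega,\A})$ with $u_n\to u$ in $W^{1,1}_{\mathrm{loc}}(I_{\Omega,\A})$.

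Next I would upgrade this local convergence to convergence in the full norm. From $u_n\to u$ in $L^1(K)$ and $\astar\le L_i$ on $(a_i,b_i)$ (by \eqref{stima1}), one gets $u_n\to u$ in $L^1_{\mathrm{loc}}$ against $\astar$; combined with the uniform bound $\sup_n\norm{u_n}_{L^1(I_{\Omega,\A},\astar)}<\infty$ (a consequence of $(u_n)$ being Cauchy, hence bounded, in $\mathrm{Dom}_\A$) and Fatou's lemma, $u\in L^1(I_{\Omega,\A},\astar)$ with $\norm{u-u_n}_{L^1(I_{\Omega,\A},\astar)}\to 0$ --- here one applies Fatou to $|u-u_m|\astar = \lim_{n}|u_n-u_m|\astar$ and the fact that $(|u_n-u_m|)_{n}$ is small in $L^1(\astar)$-norm uniformly, or more cleanly estimates $\norm{u-u_m}_{L^1(\astar)}\le\liminf_n\norm{u_n-u_m}_{L^1(\astar)}$. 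Similarly, since $u_n'\to u'$ in $L^1(K)$ and $\A$ is locally bounded above on each $K\Subset(a_i,b_i)$ under (H3) (indeed $\A\in W^{1,1}_{\mathrm{loc}}(I_{\Omega,\A})\subset L^\infty_{\mathrm{loc}}$ in one dimension), one shows $\int_K|u'-u_m'|\A\,\de x\le\liminf_n\int_{I_{\Omega,\A}}|u_n'-u_m'|\A\,\de x$, let $K\uparrow I_{\Omega,\A}$ by monotone convergence, and conclude $\int_{I_{\Omega,\A}}|u'|\A\,\de x<\infty$ and $|(\A,D(u-u_m))|(I_{\Omega,\A})\to 0$. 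Thus $u\in\mathrm{Dom}_\A$ and $\norm{u_n-u}_{\mathrm{Dom}_\A}\to 0$, proving completeness.

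For the last assertion --- that norm convergence implies $(\astar,D\A)$-convergence --- suppose $\norm{u_n-u}_{\mathrm{Dom}_\A}\to0$. Then $u_n\to u$ strongly in $L^1(I_{\Omega,\A},\astar)$, which in particular gives weak convergence in $L^1_{\mathrm{loc}}(I_{\Omega,\A},\astar)$, i.e.\ (i) of Definition \ref{ourconv}. For (ii), I note that $|(\A,Du_n)|(I_{\Omega,\A})=\int_{I_{\Omega,\A}}|u_n'|\A\,\de x$ being bounded gives, via the chain rule $D(u_nw)=u_n'w\,\de x + u_n\,D w$ valid under (H3) (recall $Dw = w'\,\de x$ here as $w\in W^{1,1}_{\mathrm{loc}}$), control on $u_n'\A$ in $L^1$; then $u_n\,Dw = u_nw'\,\de x$, and from $u_n\to u$ in $L^1(\astar)$ plus local boundedness of $w'/\astar$ (which requires a short argument: on each $K\Subset(a_i,b_i)$, $\astar\ge M_{i,K}$ and $w'\in L^1_{\mathrm{loc}}$, so actually one gets strong $L^1_{\mathrm{loc}}$ convergence of $u_n$ only, not against $|Dw|$ directly). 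The cleaner route for (ii) is: norm convergence gives $u_n\to u$ in $L^1_{\mathrm{loc}}(I_{\Omega,\A})$ (via $\astar\ge M_{i,K}$ on compacts), and since $|Dw|=|w'|\,\de x\in L^1_{\mathrm{loc}}(I_{\Omega,\A})$ with $u_n\to u$ pointwise a.e.\ along a subsequence and dominated locally (using the uniform $L^\infty_{\mathrm{loc}}$ bound coming from the Poincaré-type estimate \eqref{b5} and the boundedness of the norms), dominated convergence yields $u_n\to u$ in $L^1_{\mathrm{loc}}(I_{\Omega,\A},|Dw|)$, hence weakly, giving (ii). I expect the main obstacle to be item (ii): one must argue carefully that the $\mathrm{Dom}_\A$-norm controls the behaviour of $u_n$ against $|Dw|$ on compacts, which is where the finite-degeneracy hypothesis (H4) and the $L^\infty_{\mathrm{loc}}$-property of $\astar$ from Proposition \ref{dualpropw}(iii)--(iv), together with the uniform bound from \eqref{b5}, are used to produce a local domination.
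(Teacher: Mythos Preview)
Your approach is correct and substantially more detailed than the paper's. The paper's own proof is essentially two lines: it notes that $\mathrm{Dom}_\A$ is a linear subspace of $BV^{\A}_{\rm loc}(I_{\Omega,\A})$ and then invokes \cite[Corollary~5.2]{COM}, without spelling out either completeness or the $(\astar,D\A)$-convergence implication. Your self-contained Cauchy argument --- local $W^{1,1}$-convergence on each $K\Subset(a_i,b_i)$ via the lower bound $\A\ge\astar\ge M_{i,K}$, followed by Fatou to close in the full norm --- is the natural way to fill in these details and buys independence from the external reference.

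Two small cleanups. For item (ii) of the $(\astar,D\A)$-convergence, your route through \eqref{b5} works but is roundabout; it is cleaner to observe that norm convergence already gives $u_n\to u$ in $W^{1,1}(K)$ for every $K\Subset(a_i,b_i)$ (this is exactly your completeness computation), hence uniformly on $K$ by the one-dimensional embedding $W^{1,1}(K)\hookrightarrow C^0(K)$, and then
\[
\int_K|u_n-u|\,|\A'|\,\de x \le \|u_n-u\|_{L^\infty(K)}\,\|\A'\|_{L^1(K)}\to 0
\]
using $\A'\in L^1_{\rm loc}(I_{\Omega,\A})$ from (H3). Second, (H4) is not needed anywhere: your opening observation that only (H1)--(H3) are assumed is correct, and since both conditions in Definition~\ref{ourconv} are purely local, your closing hesitation about invoking finite degeneracy for (ii) is unwarranted.
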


\begin{proof}
Notice that ${\rm{Dom}}_{\A}$ is a linear subspace of ${\rm BV}_{\rm loc}^{\A}(I_{\Omega,\A})$, and 
by \cite[Corollary 5.2]{COM} we can endow it with the norm 
\begin{align}\label{nnorm}
\norm{u}_{{\rm BV}^{\A}(I_{\Omega,\A})}\coloneqq \left\vert (\A,Du) \right\vert(I_{\Omega,\A}), \hskip 0,1cm u\in {\rm{Dom}}_{\A}.
\end{align}

\end{proof}

We also have the following compactness result which extends Proposition 9.3 in \cite{COM}. In what follows, we denote by $\LL^{1}$ the unidimensional Lebesgue measure.

\begin{prop}
Suppose that ${\rm (H1)-(H3)}$ hold true, and 
\begin{equation}\label{support}
\LL^1(\Omega\setminus {\rm{supp}}(\astar))=0.
\end{equation}
Let $(u_{k})\subset  {\rm{Dom}}_{\A}
$ be a sequence of functions such that
\begin{align}\label{1novebis}
\sup_{k\in \bbN}\|u_k\|_{L^\infty}+\norm{u_k}_{{\rm{Dom}}_{\A}}
<+\infty.
\end{align}
Then 
there exist $u\in {\rm{Dom}}_{\A}$ 
and a subsequence $(u_{k_{j}})$ such that, possibly up to a further subsequence, 
$u_{k_j} \to u$ in $L^1_{\rm{loc}}(I_{\Omega,\A}, |D \A|)$; so that the sequence $(u_{k_j})_{j \in \bbN}$ locally $(\astar,D\A)$-converges to $u$ in $I_{\Omega,\A}$.
\end{prop}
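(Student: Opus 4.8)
The plan is to bootstrap the $L^1_{\rm loc}(I_{\Omega,\A},\astar)$--compactness already contained in Proposition \ref{proppi} and its Corollary, upgrading it so that it also includes convergence in $L^1_{\rm loc}(I_{\Omega,\A},|D\A|)$; the extra ingredient that makes this possible is hypothesis (H3), which forces $|D\A|$ to be absolutely continuous with respect to the Lebesgue measure, together with the uniform $L^\infty$--bound in \eqref{1novebis}.

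First, I would observe that the single bound \eqref{1novebis} already implies both \eqref{hypo1} and \eqref{hypo11111} in Proposition \ref{proppi}: the $\mathrm{Dom}_{\A}$--norm controls $|(\A,Du_k)|(I_{\Omega,\A})$, while $\sup_k\|u_k\|_{L^\infty}<+\infty$ controls $\sup_k u_k(\tfrac{a_i+b_i}{2})$ and $\sup_k\|u_k\|_{L^\infty(I_{\Omega,\A})}$. Since $N_\A<+\infty$ by the standing assumption (H4), I can fix an exhaustion of each of the finitely many intervals $(a_i,b_i)$ by compact subintervals and apply Proposition \ref{proppi} finitely many times along a diagonal subsequence. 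This produces a subsequence $(u_{k_j})$ and a limit $u$ with $u_{k_j}\to u$ in $L^1_{\rm loc}(I_{\Omega,\A},\astar)$, $u\in W^{1,1}_{\rm loc}(I_{\Omega,\A})$, and, by the $L^\infty$--part of Proposition \ref{proppi} (equivalently, by the Corollary following it), $u\in {\rm BV}^{\A}(I_{\Omega,\A})\subset{\rm BV}^{\A}_{\rm loc}(I_{\Omega,\A})$ and $u\in L^1(I_{\Omega,\A},\astar)$. Hence $u\in\mathrm{Dom}_{\A}$, with $\|u\|_{\mathrm{Dom}_{\A}}<+\infty$.

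Next I would produce the convergence in $L^1_{\rm loc}(I_{\Omega,\A},|D\A|)$. Using (H3), I write $D\A=\A'\,\LL^1$ with $\A'\in L^1_{\rm loc}(I_{\Omega,\A})$, so that $|D\A|=|\A'|\,\LL^1\ll\LL^1$. On a fixed $K\Subset(a_i,b_i)$ the convergence from the previous step already yields $u_{k_j}\to u$ in $L^1(K)$, because $\astar\ge M_{i,K}>0$ on $K$ by \eqref{stima2}; passing to yet a further subsequence I may assume $u_{k_j}\to u$ $\LL^1$--a.e.\ on $K$, hence also $|D\A|$--a.e.\ on $K$ by absolute continuity. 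The uniform bound $|u_{k_j}-u|\le 2\sup_k\|u_k\|_{L^\infty}=:2C$ provides the dominating function $2C\,|\A'|\in L^1(K)$, so the dominated convergence theorem gives $\int_K|u_{k_j}-u|\,\de|D\A|=\int_K|u_{k_j}-u|\,|\A'|\,\de x\to 0$; a final diagonalisation over an exhaustion of $I_{\Omega,\A}$ (again finite, as $N_\A<+\infty$) delivers $u_{k_j}\to u$ in $L^1_{\rm loc}(I_{\Omega,\A},|D\A|)$. Since strong convergence implies weak convergence, conditions (i)--(ii) of Definition \ref{ourconv} hold, i.e.\ $(u_{k_j})$ locally $(\astar,D\A)$--converges to $u$; the role of \eqref{support} is only to ensure $\astar>0$ $\LL^1$--a.e.\ in $\Omega$, so that $u$ is unambiguously identified as an element of $\mathrm{Dom}_{\A}\subset X=L^1(\Omega,\astar)$.

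I expect the genuinely nontrivial step to be the $L^1_{\rm loc}(I_{\Omega,\A},|D\A|)$--convergence: the compactness inherited from \cite{COM} lives in $L^1_{\rm loc}(\astar)$ only, and upgrading it requires precisely the two inputs absent in the more general framework — the absolute continuity of $|D\A|$ coming from (H3), which turns $\LL^1$--a.e.\ convergence into $|D\A|$--a.e.\ convergence, and the uniform $L^\infty$--bound in \eqref{1novebis}, which furnishes the integrable majorant. Everything else is a routine finite diagonal extraction and the elementary observation that strong convergence implies weak convergence.
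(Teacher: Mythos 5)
Your proof is correct and follows essentially the same route as the paper's: apply Proposition \ref{proppi} to get $L^1_{\rm loc}(\astar)$-convergence, upgrade to pointwise $\LL^1$-a.e.\ convergence on compact subsets, use the absolute continuity $|D\A|\ll\LL^1$ from (H3) to transfer this to $|D\A|$-a.e.\ convergence, and conclude by dominated convergence with the uniform $L^\infty$ majorant. The only (harmless) deviations are that you obtain the $\LL^1$-a.e.\ convergence on $K\Subset(a_i,b_i)$ from the lower bound \eqref{stima2} rather than from \eqref{support} as the paper does, and that you lean on (H4) for a finite diagonal extraction where a countable diagonal argument would suffice under the stated hypotheses (H1)--(H3) alone.
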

\begin{proof}
By Proposition \ref{proppi} for any interval $K\Subset (a_{i},b_{i})$, with $i=1,\ldots,N_{\A}$, there exists $u\in L^{1}(K,\astar)\cap W^{1,1}(K)$ and a
 subsequence $(u_{k_{j}})$ such that 
  $u_{k_j}\to u$ in $L^{1}(K, \astar)$ and so
 $u_{k_j}(x) \to u(x)$ for $|\astar| \LL^{1}$-a.e. $x \in K$, and therefore $u_{k_j}(x) \to u(x)$ for $\LL^{1}$-a.e. $x \in {\rm supp}(\astar)\cap K$. 
 Then by \eqref{support} $u_{k_j}(x) \to u(x)$ for $\LL^{1}$-a.e. $x \in K$. Hence, since $|D \A| \ll \LL^{1}$, we get 
\begin{equation*}
u_{k_j}(x) \to u(x) \ \text{ for } \ |D\A|\text{-a.e. } x \in K.
\end{equation*}
Since by \eqref{1novebis}, there exists $C>0$ such that
\begin{equation*}
|u_{k_j} - u| \le  C \in L^1(K, |D\A|),
\end{equation*}
by Lebesgue's Dominated Convergence Theorem we conclude that $u_{k_j} \to u$ in $L^1(\Omega, |D \A|)$.
This implies that $(u_{k_j})_{j \in \bbN}$ $(\astar,D\A)$-converges to $u$ in $K$. 
On the other hand,
by Fatou's Lemma we obtain
\begin{equation*}
\int_{K} |u| \, \de |D\A| = \int_{K} \liminf_{j \to + \infty} |u_{k_j}| \, \de |D \A| \le \liminf_{j \to + \infty} \int_{K} |u_{k_j}| \, \de |D \A| < + \infty.
\end{equation*}
Therefore, we have $u \in L^1_{\rm{loc}}(I_{\Omega,\A}, |D \A|)$, and so $u \in BV^{\A}_{\rm{loc}}(I_{\Omega,\A})$.
\end{proof}

%%%%%%%%%%%%%%%%%%
\subsection{Main result}
We then consider
$$
\overline{F}(u):=\inf\{\liminf_{k\to+\infty}F(u_k): u_k\to u {\rm{\ \  w.r.t. \ \ }} (\astar,D\A){\rm{-convergence \ \ }}
\}
%{\rm sc^-}((\astar,D\A))-F(u)\ \ \ \,
$$
where
\begin{equation*}
\!\!\!\!\!\! \!\!\! \!\!\!\!\!\! \! F(u)\coloneqq
\begin{cases}
\displaystyle\int_{\Omega}\vert  u^{\prime}\vert\A\,\de x &\text{ if } u\in {\mathrm AC}(\overline\Omega),\\
+\infty & \text{ if } u\in L^{1}(\Omega,\astar)\setminus  {\mathrm AC}(\overline\Omega),
\end{cases}
\end{equation*}
and let
$$
\mathscr{A}_{\overline{F}}\coloneqq\{u\in L^{1}(\Omega,\astar): \overline{F}(u)<+\infty
\}\,.
$$
Note that for every $u\in {\rm AC}(\overline\Omega)$ we have
$$
\int_\Omega |u^\prime|w\,\de x
=\int_{\Omega}\vert (\A, Du) \vert.
$$
\begin{teo} \label{maindegen} 
Suppose that ${\rm (H1)-(H4)}$ hold true. Then
\begin{equation*}
\mathscr{A}_{\overline{F}}={\rm{Dom}}_{\A}
\end{equation*}
where ${\rm{Dom}}_{\A}$ is defined by \eqref{bvspa} and the following representation holds for the relaxed functional
\begin{equation}\label{Fbar}
\overline{F}(u)=
\begin{cases}
\displaystyle \vert (\A, Du) \vert(I_{\Omega,\A}) &\text{ if } u\in {\rm{Dom}}_{\A},\\
+\infty & \text{ if } u\in L^{1}(\Omega,\astar)\setminus {\rm{Dom}}_{\A} .
\end{cases}
\end{equation}  
\end{teo}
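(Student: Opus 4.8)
The plan is to prove the two one‑sided inequalities that together give both the identification of the domain and the representation \eqref{Fbar}: a $\liminf$ inequality, which also forces $\mathscr{A}_{\overline F}\subseteq{\rm{Dom}}_\A$ and $\overline F\ge|(\A,D\cdot)|(I_{\Omega,\A})$ on ${\rm{Dom}}_\A$, and a $\limsup$ inequality produced by an explicit recovery sequence, which gives ${\rm{Dom}}_\A\subseteq\mathscr{A}_{\overline F}$ and $\overline F\le|(\A,D\cdot)|(I_{\Omega,\A})$.

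\emph{Step 1 (lower bound).} Suppose $(u_k)$ $(\astar,D\A)$‑converges to $u$ with $\ell:=\liminf_k F(u_k)<+\infty$. Passing to a subsequence along which $F(u_k)\to\ell$ and $\sup_k F(u_k)<+\infty$, I may assume every $u_k\in{\mathrm{AC}}(\overline\Omega)$, so that on $I_{\Omega,\A}$ the pairing $(\A,Du_k)$ is the measure $u_k'\A\,\de x$ and $|(\A,Du_k)|(I_{\Omega,\A})\le F(u_k)$. Weak convergence in $L^1_{\rm loc}(I_{\Omega,\A},\astar)$ makes $\|u_k\|_{L^1(K,\astar)}$ bounded on each $K\Subset I_{\Omega,\A}$, and feeding this into \eqref{b1}--\eqref{b4} bounds $\sup_k|u_k(\tfrac{a_i+b_i}{2})|$; hence Proposition \ref{proppi} and its Corollary (using $N_\A<+\infty$ from (H4)) apply and, after a diagonal extraction, yield $u\in W^{1,1}_{\rm loc}(I_{\Omega,\A})$ with $u_k\to u$ strongly in $L^1_{\rm loc}(I_{\Omega,\A},\astar)$. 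Then I pass to the limit in $\langle(\A,Du_k),\varphi\rangle=-\int u_k\varphi\,\de D\A-\int u_k\varphi'\A\,\de x$ for $\varphi\in C^\infty_c(I_{\Omega,\A})$ (the identity being valid since $u_k\in{\mathrm{AC}}(\overline\Omega)$ and $\A\in W^{1,1}_{\rm loc}(I_{\Omega,\A})$ by (H3)): the first term converges by the weak $L^1_{\rm loc}(I_{\Omega,\A},|D\A|)$ convergence of $(u_k)$, the second because $\varphi'\A/\astar\in L^\infty({\rm supp}\,\varphi)$ ($\A$ continuous, $\astar$ bounded below on compacts of $I_{\Omega,\A}$ by \eqref{stima2}). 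Since $\sup_k|(\A,Du_k)|(I_{\Omega,\A})<+\infty$ and $I_{\Omega,\A}$ is a finite union of bounded intervals, a further subsequence of $(\A,Du_k)$ converges weakly‑$*$ to a Radon measure which, by the above, equals the distribution $(\A,Du)$; thus $u\in{\rm BV}^\A_{\rm loc}(I_{\Omega,\A})$ and hence $u\in{\rm{Dom}}_\A$, while lower semicontinuity of the total variation gives $|(\A,Du)|(I_{\Omega,\A})\le\liminf_k|(\A,Du_k)|(I_{\Omega,\A})\le\ell$. In particular no $u\notin{\rm{Dom}}_\A$ has a recovery sequence with finite $\liminf$, so $\mathscr A_{\overline F}\subseteq{\rm{Dom}}_\A$.

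\emph{Step 2 (recovery sequence).} Let $u\in{\rm{Dom}}_\A$. First I would reduce to bounded $u$: the truncations $u^{(m)}:=(-m)\vee u\wedge m$ lie in ${\rm{Dom}}_\A$, converge to $u$ strongly in $L^1_{\rm loc}(I_{\Omega,\A},\astar)$ and in $L^1_{\rm loc}(I_{\Omega,\A},|D\A|)$, and satisfy $|(\A,Du^{(m)})|(I_{\Omega,\A})=\int_{I_{\Omega,\A}}|u'|\chi_{\{|u|<m\}}\A\,\de x\uparrow|(\A,Du)|(I_{\Omega,\A})$ by monotone convergence, so a diagonal argument reduces the construction to $u\in{\rm{Dom}}_\A\cap L^\infty(I_{\Omega,\A})$. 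For such a $u$ and small $\eta>0$, define $u_\eta\in{\mathrm{AC}}(\overline\Omega)$ equal to $u$ on $\bigcup_{i=1}^{N_\A}[a_i+\eta,b_i-\eta]$ and piecewise affine elsewhere — constant except on short slices, one inside each component of $\Omega\setminus\overline{I_{\Omega,\A}}$ and the pieces $(a_i,a_i+\eta)$, $(b_i-\eta,b_i)$ — these slices being chosen so that the average of $\A$ over them is as small as we wish. This is the crucial point: on every subinterval of $\Omega\setminus\overline{I_{\Omega,\A}}$ one has ${\rm ess\,inf}\,\A=0$ (otherwise that subinterval would belong to $I_{\Omega,\A}$), so Lebesgue differentiation produces subintervals on which the average of $\A$ is below any prescribed $\delta$; moreover $\int_{b_i-\eta}^{b_i}\A\,\de x\to0$ and $\int_{a_i}^{a_i+\eta}\A\,\de x\to0$ as $\eta\to0$ since $\A\in L^1_{\rm loc}(\Omega)$. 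As $u$ is bounded, each connecting affine piece realizes a jump of size $\le 2\|u\|_{L^\infty}$, so the total added energy is $\le C\|u\|_{L^\infty}\delta\to0$; hence $F(u_\eta)=\sum_i\int_{a_i+\eta}^{b_i-\eta}|u'|\A\,\de x+o(1)\to\int_{I_{\Omega,\A}}|u'|\A\,\de x=|(\A,Du)|(I_{\Omega,\A})$ (the last equality because $u\in W^{1,1}_{\rm loc}(I_{\Omega,\A})$, so on $I_{\Omega,\A}$ the pairing measure is $u'\A\,\de x$), while $u_\eta\to u$ strongly, hence in the $(\astar,D\A)$‑sense, in $L^1_{\rm loc}(I_{\Omega,\A},\astar)$ and $L^1_{\rm loc}(I_{\Omega,\A},|D\A|)$, because $u_\eta$ coincides with $u$ on every compact subset of $I_{\Omega,\A}$ once $\eta$ is small. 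Therefore $\overline F(u)\le|(\A,Du)|(I_{\Omega,\A})$, and $u\in\mathscr A_{\overline F}$.

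\emph{Conclusion and main obstacle.} Steps 1 and 2 give $\mathscr A_{\overline F}={\rm{Dom}}_\A$ and $\overline F(u)=|(\A,Du)|(I_{\Omega,\A})$ on ${\rm{Dom}}_\A$, while $\overline F\equiv+\infty$ on $L^1(\Omega,\astar)\setminus{\rm{Dom}}_\A$ is exactly the inclusion $\mathscr A_{\overline F}\subseteq{\rm{Dom}}_\A$, so \eqref{Fbar} follows. I expect two delicate points. In Step 1, passing to the limit in the pairing genuinely uses both halves of the $(\astar,D\A)$‑convergence: the strong $L^1_{\rm loc}(\astar)$ part controls the $\int u_k\varphi'\A$ term, but the $\int u_k\varphi\,\de D\A$ term needs the weak $L^1_{\rm loc}(|D\A|)$ information — this is precisely why the convergence is built with $|D\A|$ rather than with $\A$ as in \cite{COM}. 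The harder point, and I think the genuine obstacle, is the recovery sequence across the strongly degenerate set $\Omega\setminus\overline{I_{\Omega,\A}}$: there $\A$ may be positive on a set of positive measure and so cannot be dropped from $F$, and the construction must ``hide'' the unavoidable oscillation of the approximants on slices where $\A$ is negligible on average; this is possible exactly because $1/\A\notin L^\infty$ on every subinterval there, but the bookkeeping must be made uniform over the finitely many components (this is where (H4) enters) and compatible with the preliminary truncation.
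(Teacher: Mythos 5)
Your overall strategy is sound and both halves are genuinely different from the paper's proof, so let me first compare. For the lower bound, the paper keeps the weak convergences, applies Mazur's lemma to pass to convex combinations $v_h$ that converge strongly, invokes the semicontinuity theorem for the pairing from \cite{COM} (Theorem \ref{lscpairing}), and then transfers the $\liminf$ inequality from $(v_h)$ back to $(u_h)$ by a contradiction argument; you instead use the energy bound together with \eqref{b1}--\eqref{b2} to activate the compactness of Proposition \ref{proppi}, upgrade the convergence to strong $L^1_{\rm loc}(\astar)$ convergence along a subsequence, pass to the limit directly in the two terms of the distributional pairing (the $\de D\A$ term by hypothesis (ii) of the convergence, the $\varphi'\A$ term because $\A/\astar$ is locally bounded by \eqref{stima2} and (H3)), and conclude by weak-$*$ compactness and lower semicontinuity of the total variation. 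This is cleaner and also delivers $\mathscr A_{\overline F}\subseteq{\rm Dom}_\A$ in the same stroke. For the upper bound, the paper proves density of $AC(\overline\Omega)$ in ${\rm Dom}_\A$ by approximating $u'$ in $L^1(I_{\Omega,\A},\A)$ with continuous functions and integrating from the midpoints, whereas you keep $u$ itself on $[a_i+\eta,b_i-\eta]$, truncate first to reduce to bounded $u$, and hide the connecting jumps on slices where the mean of $\A$ is small; your justification that every open subinterval of $\Omega\setminus\overline{I_{\Omega,\A}}$ has $\mathop{\rm ess\,inf}\A=0$ (by maximality of $I_{\Omega,\A}$) is correct, and on such slices the transition energy is indeed bounded by $2\|u\|_{L^\infty}$ times the mean of $\A$.

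There is, however, one genuine gap in the recovery construction: the case of \emph{touching} components, $b_i=a_{i+1}$, which is precisely the subcase to which the paper devotes the bulk of its Step~1 (the construction with $\bar\A$). When $b_i=a_{i+1}$ there is no component of $\Omega\setminus\overline{I_{\Omega,\A}}$ between $(a_i,b_i)$ and $(a_{i+1},b_{i+1})$ in which to place your slice, and yet $u$ may have genuinely different one-sided limits at $b_i$ from the two components, so a jump of size up to $2\|u\|_{L^\infty}$ must still be realized. Your fallback locations $(b_i-\eta,b_i)$ and $(a_{i+1},a_{i+1}+\eta)$ lie \emph{inside} $I_{\Omega,\A}$, where your ess-inf argument does not apply, and the observation that $\int_{b_i-\eta}^{b_i}\A\,\de x\to0$ is not sufficient: the transition cost is the jump times the \emph{mean} of $\A$ over the slice, and the mean over $(b_i-\eta,b_i)$ need not be small. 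The gap is fixable: since $b_i=a_{i+1}\notin I_{\Omega,\A}$, maximality of $I_{\Omega,\A}$ forces $1/\A\notin L^\infty((b_i-\varepsilon,b_i+\varepsilon))$ for every $\varepsilon>0$, hence at least one of the one-sided neighborhoods $(b_i-\varepsilon,b_i)$, $(a_{i+1},a_{i+1}+\varepsilon)$ satisfies $\mathop{\rm ess\,inf}\A=0$ for all $\varepsilon$ (otherwise the two components would merge), and the slice can be placed on that side. You should state and use this one-sided fact explicitly; without it the argument does not cover the hardest configuration, which is exactly where the paper's proof does its real work.
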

\begin{proof}
Let us denote by $H(u)$ the right-hand side of the above formula \eqref{Fbar}, i.e.
\begin{equation*}\label{H}
H(u)\coloneqq
\begin{cases}
\displaystyle \vert (\A, Du) \vert(I_{\Omega,\A}) &\text{ if } u\in {\rm{Dom}}_{\A},\\
+\infty & \text{ if } u\in L^{1}(\Omega,\astar)\setminus {\rm{Dom}}_{\A} .
\end{cases}
\end{equation*}  
In the following we will prove that $\overline{F}=H$ by showing the two inequalities.

\bigskip\noindent
{\it Step 1}
We first prove that $\overline{F}\le H$. To this end, it is enough to show that
\begin{align}\label{step1}
\overline{F}(u)\leq  \vert (\A, Du) \vert(I_{\Omega,\A})\hskip 0,2cm \text{for all $u\in {\rm{Dom}}_{\A}$}.
\end{align}
Suppose that $AC(\overline\Omega)$ is dense in ${\rm{Dom}}_{\A}$ with respect to \eqref{normavol}.  Then there exists a sequence $(u_{k})$ in $AC(\overline\Omega)$ such that 
\begin{align*}
\lim_{k\rightarrow +\infty}u_{k}=u \hskip 0,1cm \text{in $AC(\overline\Omega)$ with respect to \eqref{normavol}.}
\end{align*}
Then,
\begin{align*}
\overline{F}(u)\leq \lim_{k\rightarrow +\infty}\overline{F}(u_{k})&= \lim_{k\rightarrow +\infty}\vert (\A, Du_{k}) \vert(I_{\Omega,\A})=\vert (\A, Du) \vert(I_{\Omega,\A}),
\end{align*}
which is \eqref{step1}. To complete the proof, 
we now need to show that $AC(\overline\Omega)$ is actually dense in ${\rm{Dom}}_{\A}$ with respect to \eqref{normavol}, 
%\begin{equation}\label{approxseqW1}\lim_{h\to\infty}u_h=\,u \text{ in } {\rm{Dom}}_{\A}\,,\end{equation}
i.e.,  that for each $u\in  {\rm{Dom}}_{\A} $ there is $u_h\in AC(\overline\Omega)$ such that
\begin{align}\label{approxseqW1}
\begin{aligned}
&\lim_{h\to\infty}u_h=\,u \text{ in } L^{1}(I_{\Omega,\A},\astar
)\hskip 0,1cm \text{and}\\&\vert(\A,Du_{h}) \vert(I_{\Omega,\A}) \rightarrow \vert (\A,Du) \vert(I_{\Omega,\A}) \, \text{as $h\rightarrow +\infty$.}
\end{aligned}
\end{align}
Since $u^\prime\in L^{1}(I_{\Omega,\A},\A)$, we can apply \cite[Theorem 3.45]{Can} to find a sequence of functions $(v_h)_h\subset C^0_c(I_{\Omega,\A})\subset L^{1}(\Omega,\A)$ such that
\begin{equation}\label{nuova11}
\|v_h-u^\prime\|_{L^{1}(I_{\Omega,\A},\A)}=\,\sum_{i=1}^{N_{\A}} \int_{a_i}^{b_i}|v_h-u^\prime|\,\A\,\de x\to 0\text{ as }h\to +\infty\,.
\end{equation}
Let us define, for given $h\in\bbN$, $\tilde{u}^{(i)}_h:(a_i,b_i)\to\R$, $i=1,2,\dots,h$ as
\begin{equation}\label{nuova22}
\tilde{u}^{(i)}_h(x) :=
u\left(\frac{a_i+b_i}{2}\right)-\int_x^{\frac{a_i+b_i}{2}} v_h(y)\,dy\,,\ \quad x\in (a_i,b_i).
\end{equation}
We divide the proof in three cases, according to the structure of the set  $I_{\Omega,\A}$.
\\ 
{\bf 1st case.} Assume that $N_{\A}=1$.  In this case $I_{\Omega,\A}=\,(a_1,b_1)$. 
Let $(\tilde{u}^{(1)}_h)_h$ the sequence defined in \eqref{nuova22} for $i=1$ and, for each $h$, let $u_h=\bar u_h:\,(a,b)\to\R$ defined as
\[
\bar u_h(x):=
\begin{cases}
\tilde{u}^{(1)}_h(a_1)&\text{ if }x\in [a,a_1],\\
\tilde{u}^{(1)}_h(x)&\text{ if }x\in (a_1,b_1),\\
\tilde{u}^{(1)}_h(b_1)&\text{ if }x\in [b_1,b]\,.
\end{cases}
\]
Then it is easy to see that $(\bar u_h)_h\subset AC(\overline\Omega)$. Let us prove that
\begin{equation}\label{CCCf1}
\int_{a}^{b}|\bar u_h-u|\,\astar\,\de x\to 0\text{ as }h\to \infty\,.
\end{equation}
In fact, since $\astar\equiv 0$ in $\Omega\setminus I_{\Omega,\A}$,
$$
\int_{a}^{b}|\bar u_h-u|\,\astar\,\de x=
\int_{a_1}^{b_1}|\bar u_h-u|\,\astar\,\de x.
$$  

By Poincar\'e type inequality \eqref{poincareformula2} with $\tilde{u}_{h}-u$ instead of $u$ and 
since $\tilde{u}_h\left(\frac{a_1+b_1}{2}\right)=\,u\left(\frac{a_1+b_1}{2}\right)$, we have
\begin{equation*}
\begin{split}
\int_{a_1}^{b_1}|\bar u_h-u|\,\astar\,\de x
&\leq \,\int_{I_{\Omega,\A}}|\bar u'_h -u' |\,\A\,\de x
=\vert (D(\bar u_{h}-u),\A) \vert(I_{\Omega,\A})\\
&=\,\int_{I_{\Omega,\A}}|v_h -u' |\,\A\,\de x
\,.
\end{split}
\end{equation*}
Then $\vert (D(\bar u_{h}-u),\A) \vert(I_{\Omega,\A}) \rightarrow 0$, as $h\rightarrow +\infty$.  Hence

\begin{align}\label{25bis}
\Big\vert\vert (D\bar u_{h},\A) \vert(I_{\Omega,\A})- \vert (D u,\A) \vert(I_{\Omega,\A})\Big\vert\leq \vert (D(\bar u_{h}-u),\A) \vert(I_{\Omega,\A}) \rightarrow 0, \hskip 0,1cm \text{as $h\rightarrow +\infty$.}
\end{align}
Moreover, by \eqref{nuova11} and \eqref{25bis}, \eqref{CCCf1} follows.\\
 {\bf 2nd case.}\\
Assume now that $N_w=2$.  In this case $I_{\Omega,\A}=\,(a_1,b_1)\cup (a_2,b_2)$, and assume that $b_1\leq a_2$.
 \\ 
Firstly, we consider the subcase $b_1< a_2$.
 Let $(\tilde{u}^{(i)}_h)_h$ the sequence defined in \eqref{nuova22} for $i=1,2$ and, for each $h$, let $u_h=\bar u_h:\,\Omega\to\R$ defined as
\[
\bar u_h(x):=
\begin{cases}
\tilde{u}^{(1)}_h(a_1)&\text{ if }x\in [a,a_1),\\
\tilde{u}^{(1)}_h(x)&\text{ if }x\in [a_1,b_1),\\
\frac{\tilde{u}^{(2)}_h(a_2)-\tilde{u}^{(1)}_h(b_1)}{a_2-b_1}(x-b_1)+\tilde{u}^{(1)}_h(b_1)&\text{ if }x\in [b_1,a_2),\\
\tilde{u}^{(2)}_h(x)&\text{ if }x\in [a_2,b_2),\\
\tilde{u}^{(2)}_h(b_2)&\text{ if }x\in [b_2,b]\,.
\end{cases}
\]
Notice that $( u_h)_h\subset AC(\overline\Omega)$ and \eqref{approxseqW1} holds.  Indeed, it can be done by repeating the arguments of the 1st case and by observing that $\astar\equiv 0$ in $\Omega\setminus \overline I_{\Omega,\A}$.
 \\ 
 Now, we consider the second subcase $b_1=a_2$. Let $h\in \bbN$ such that 
 $$
 \frac1h<\min\left\{\frac{b_i-a_i}4:i=1,2\right\}.
 $$

 \begin{equation*}
\bar{\A}(x):=
\begin{cases}
\int_{{a_i+b_i}\over{2}}^{x}\astar(y) \,\de y &\text{ if } {a_i}\leq x\leq {{3a_i+b_i}\over{4}},\\
\int_{{3a_i+b_i}\over{4}}^{{a_i+3b_i}\over{4}}\astar(y)\,\de y&\text{ if } {{3a_i+b_i}\over{4}}\leq x\leq {{a_i+3b_i}\over{4}},\\
\int_{x}^{{a_i+b_i}\over{2}}\astar(y)\,\de y &\text{ if } {{a_i+3b_i}\over{4}}\leq x\leq b_i,\\
\ \ \ \ \ \ \ \ \ 0&\text{ if } x\in \Omega\setminus \overline I_{\Omega,\A}\,.
\end{cases}
\end{equation*}
Note that by \eqref{stima1} $\hat \A\in L^\infty((a_i,b_i))$ and so $\bar \A\in L^\infty((a_i,b_i))$.
 
 Let $u_h=\bar u_h:\,\Omega\to\R$ defined as
\[
 \bar u_h(x):=
\begin{cases}
\tilde{u}^{(1)}_h(a_1)&\text{ if }x\in [a,a_1),\\
\tilde{u}^{(1)}_h(x)&\text{ if }x\in [a_1,\frac{a_1+b_1}2),\\
u(x)&\text{ if }x\in [\frac{a_1+b_1}2,b_1-\frac1h),\\
u(x)\frac{\bar{\A}(x)}{\vert \bar{\A}(b_1-\frac1h)\vert}&\text{ if }x\in [b_1-\frac1h,b_1)\\u(x)\frac{\bar{\A}(x)}{\vert \bar{\A}(a_2+\frac1h)\vert}&\text{ if }x\in [a_2,a_2+\frac1h),\\
u(x)&\text{ if }x\in [a_2+\frac1h,\frac{a_2+b_2}2),\\
\tilde{u}^{(2)}_h(x)&\text{ if }x\in [\frac{a_2+b_2}2,b_2),\\
\tilde{u}^{(2)}_h(b_2)&\text{ if }x\in [b_2,b]\,.
\end{cases}
\]

Then $( u_h)_h\subset AC(\overline\Omega)$ and \eqref{approxseqW1} holds.  Indeed,  in order to prove \eqref{approxseqW1}, we now prove that
\begin{equation}\label{CCCfhhhh1}
\int_{\frac{a_1+b_1}2}^{b_1}|\bar u_h-u|\,\astar\,\de x\to 0\text{ as }h\to \infty\,,
\end{equation}
and 
\begin{equation}\label{CCCfhhhhvvv1}
\int_{\frac{a_1+b_1}2}^{b_1}|\bar u_h^\prime|\,\A\,\de x\leq C<+ \infty\,,
\end{equation}
since the proof of the analogous conditions on $(a_2,\frac{a_2+b_2}2)$ are similar.
Indeed, we have

\begin{equation*}
\int_{\frac{a_1+b_1}2}^{b_1}|\bar u_h-u| \,\astar\,\de x=
\int_{b_1-\frac1h}^{b_1}u \left(1-\frac{\bar{\A}(x)}{\vert \bar{\A}(b_1-\frac1h)\vert}\right)\,\astar(x)\,\de x.
\end{equation*}
Notice that $\bar{\A}$ is decreasing in $[{{a_1+3b_1}\over{4}},b_1]$, and by \eqref{stima1}
\begin{equation}
\label{limitata}
0\leq 1-\frac{\bar{\A}(x)}{\vert \bar{\A}(b_1-\frac1h)\vert }=\frac{\vert \bar{\A}(b_1-\frac1h)\vert-\bar{\A}(x)}{\vert \bar{\A}(b_1-\frac1h)\vert }
\leq 
\frac{2L_1}{\vert \bar{\A}(b_1-\frac1h)\vert }
=: \tilde c_h,\qquad x\in \left({b_1-\frac1h},b_1\right).
\end{equation}
Note that $\vert \bar{\A}(b_1-\frac1h)\vert \to \vert \bar{\A}(b_1)\vert \not=0$. Indeed,
$$
\bar{\A}(b_1)=\int_{b_1}^{{a_1+b_1}\over{2}}\astar(y)\,\de y=
\int_{b_1}^{{a_1+b_1}\over{2}}
 \left(\norm{ \A^{-1}}_{L^{\infty}\left(\left(\frac{a_{1}+b_{1}}{2},y\right)\right)}\right)^{-1} 
\,\de y<0.
$$
This implies that
\begin{equation*}
\int_{\frac{a_1+b_1}2}^{b_1}|\bar u_h-u|\,\astar\,\de x\leq
\tilde c_h\int_{b_1-\frac1h}^{b_1}u\,\astar\,\de x
\to 0\text{ as }h\to +\infty\,.
\end{equation*}
This proves \eqref{CCCfhhhh1}.
On the other hand, in order to prove \eqref{CCCfhhhhvvv1} we note that
\[
 \bar u_h'(x):=
\begin{cases}
u'(x)&\text{ if }x\in [\frac{a_1+b_1}2,b_1-\frac1h),\\
\frac{1}{\vert\bar{\A}(b_1-\frac1h)\vert}
\left(u'(x)\bar{\A}(x)+u(x)\bar{\A}'(x)\right)
&\text{ if }x\in [b_1-\frac1h,b_1).
\end{cases}
\]
Therefore
\begin{align*}
\begin{aligned}
\int_{\frac{a_1+b_1}2}^{b_1}|\bar u_h^\prime|\,\A\,\de x&=\int_{\frac{a_1+b_1}2}^{b_1-\frac1h}|u^\prime|\,\A\,\de x+\int_{b_1-\frac1h}^{b_1}\frac{1}{\vert \bar{\A}(b_1-\frac1h)\vert}
\left|u'\bar{\A}+u\bar{\A}'\right|\,\A\,\de x
\\
&\leq\int_{\frac{a_1+b_1}2}^{b_1}|u^\prime|\,\A\,\de x+
\int_{\frac{a_1+b_1}2}^{b_1}\frac{|\bar{\A}|}{\vert \bar{\A}(b_1-\frac1h)\vert}|u'|\A\,\de x+\\&\phantom{form}+\int_{b_1-\frac1h}^{b_1}\frac{1}{\vert \bar{\A}(b_1-\frac1h)\vert}|u|\,|\bar{\A}'|\,\A\,\de x.
\end{aligned}
\end{align*}
Notice that the second integral is finite by \eqref{limitata}. Let us prove that the last integral tends to $0$. Indeed, 
$$
\bar{\A}'
=
-\astar
\quad\text{ a.e. in }\left(b_1-\frac1h,b_1\right)
$$
and, since $u\astar$ is bounded in $(b_1-1/h,b_1)$ (see Remark \ref{bb5}), we obtain 
$$
\int_{b_1-\frac1h}^{b_1}\frac{1}{\vert \bar{\A}(b_1-\frac1h)\vert}|u||\bar{\A}'|
\,\A\,\de x
=\int_{b_1-\frac1h}^{b_1}\frac{1}{\vert \bar{\A}(b_1-\frac1h)\vert}|u||\astar|\A
\,\de x
$$
$$
\leq
C
\frac{1}{\vert \bar{\A}(b_1-\frac1h)\vert}\int_{b_1-\frac1h}^{b_1}\A\,\de x\rightarrow 0\hskip 0,1cm \text{as $h\rightarrow +\infty$.}
$$
{\bf 3rd case.} In the general case $I_{\Omega,\A}=\,\bigcup_{i=1}^{N_{\A}}(a_i,b_i)$ with $b_i\leq a_{i+1}$, for every $i=1,\dots,N_{\A-1}$, it is sufficient to repeat the arguments of the 2nd case for every $i=1,\dots,N_{\A-1}$.
 \bigskip
 
 {\it Step 2} We now prove that $H\le \overline{F}$. To this end, since $$\overline{F}=\sup\{G: G \hbox{\ lower semicontinuous\ and\ } G\le F\},$$ its is enough to show that $H$ is lower semicontinuous and $H\le F$. The last inequality is trivially true, so, 
 we now need to prove the $\liminf$ inequality for $H$. Let $u_{h}\rightarrow u$ with respect to the $(\astar,D\A)$-convergence in $I_{\Omega,\A}$.  Then we have that $u_{h}\weak u$ weakly in $L^{1}(I_{\Omega,\A},\astar)$. 
 By Mazur Lemma there exists a function $f:\bbN\rightarrow \bbN$ and a sequence $\{\alpha_{k,h}:h\leq k\leq f(h)\}$ such that $\alpha_{k,h}\geq 0$, and
\begin{align*}
\sum_{k=h}^{f(h)}\alpha_{k,h}=1
\end{align*}
such that the sequence 
 
 \begin{align*}
 v_{h}\coloneqq \sum_{k=h}^{f(h)}\alpha_{k,h}u_{k}
 \end{align*}
 strongly converges to $u$ in $L^{1}(I_{\Omega,\A},\astar)$ and $L^{1}(I_{\Omega,\A},|Dw|)$. Notice that \eqref{localc} and the definition of $\astar$ imply that $\LL^{1}(I_{\Omega,\A}\backslash \supp(\astar))=0$. Then  $v_{h}(x)\rightarrow u(x)$ for $\LL^{1}$-a.e. $x\in I_{\Omega,\A}$. Since $\A\in W_{\rm loc}^{1,1}(I_{\Omega,\A})\subset L^\infty_{{\rm loc}}(I_{\Omega,\A})$ and \eqref{localc} hold true, then for all compact $K\Subset I_{i}\coloneqq (a_{i},b_{i})$ one gets

 \begin{align*}
 \frac{1}{c_{i,K}}\int_{K}\vert v_{h}-u\vert \de x\leq \int_{K}\vert v_{h}-u\vert \A \de x\leq C\int_{K}\vert v_{h}-u\vert \de x,
 \end{align*}
 for some positive  constant $C$. Then  $v_{h}\rightarrow u$ strongly in $L_{\rm loc}^{1}(I_{\Omega,\A},\A)$, and thus weakly in $L_{{\rm loc}}^{1}(I_{\Omega,\A},\A)$. Hence, since  $v_{h}\rightarrow u$ strongly in $L_{\rm loc}^{1}(I_{\Omega,\A},|D\A|)$, and thus weakly in $L_{{\rm loc}}^{1}(I_{\Omega,\A},|D\A|)$ we conclude that $v_{h}$ $(\A,\frac{1}{2})$-converges to $u$ in the sense of Definition \ref{atwo}.  
Therefore, we may apply Theorem \ref{lscpairing} to conclude the desired lower semicontinuity inequality. Indeed, by \eqref{sci} 
%by the definition of pairing, one has that
%\begin{align*}
%\lim_{h\rightarrow +\infty}\nm{(\A,Dv_{h}) ,\varphi}&=-\int_{I_{\Omega,\A}}v_{h}\varphi\,D\A- \int_{I_{\Omega,\A}} v_{h}\varphi^{\prime}\A\de x
%%\\
%%&
%=\nm{(\A,Du) ,\varphi}
%\end{align*}
%for $\varphi\in C_{c}^{1}(I_{\Omega,\A})$. Then, since
%$$
%H(u)
%$$
 we get
\begin{align}\label{ultimad}
\liminf_{h\rightarrow +\infty}H(v_{h})&\geq\lim_{h\rightarrow +\infty}\vert (\A, Dv_{h}) \vert(I_{\Omega,\A})
%\\
%&
\geq \vert (\A, Du) \vert(I_{\Omega,\A})=H(u).
\end{align}
Now let us prove that \eqref{ultimad} holds true for $u_{h}$. Suppose by contradiction that \eqref{ultimad} is not true for $u_{h}$. By the definition of $\liminf$ we have that 

\begin{align}\label{inft1}
&C_{1}\coloneqq\sup\left\{\inf\left\{\int_{I_{\Omega,\A}}\A\vert u_{m}'\vert\de x: m\geq h \right\};h\in \bbN \right\} < \int_{I_{\Omega,\A}}\A\vert u'\vert\de x,\\\label{inft2}
& C_{2}\coloneqq\sup\left\{\inf\left\{\int_{I_{\Omega,\A}}\A\vert v_{j}'\vert\de x: j\geq h' \right\};h'\in \bbN \right\} \geq\int_{I_{\Omega,\A}}\A\vert u'\vert\de x.
\end{align}
In \eqref{inft2}, we use the definition of $\sup$, so that for all $\varepsilon>0$, there exists $h'\in \bbN$ such that

\begin{align*}
\inf\left\{\int_{I_{\Omega,\A}}\A\vert v_{j}'\vert\de x: j\geq h' \right\}&>C_{2}-\varepsilon\\
&\geq \int_{I_{\Omega,\A}}\A\vert u'\vert\de x-\varepsilon.
\end{align*}

Moreover, by \eqref{inft1}, for all $h\in \bbN$, we get

\begin{align*}
\inf\left\{\int_{I_{\Omega,\A}}\A\vert u_{m}'\vert\de x: m\geq h \right\} < \int_{I_{\Omega,\A}}\A\vert u'\vert\de x.
\end{align*} 
It implies that  $\int_{I_{\Omega,\A}}\A\vert u'\vert\de x$ is not the infimum, so that  there exists $\delta>0$,  such that for each $m\geq h$

\begin{align}\label{inft4}
\int_{I_{\Omega,\A}}\A\vert u_{m}'\vert\de x +\delta < \int_{I_{\Omega,\A}}\A\vert u'\vert\de x,
\end{align}
and the same inequality holds true for all $m'\geq m\geq h$. Now let us choose such $h\geq h'$. Then

\begin{align*}
\int_{I_{\Omega,\A}}\A\vert u'\vert\de x-\varepsilon \leq 
\inf\left\{\int_{I_{\Omega,\A}}\A\vert v_{j}'\vert\de x: j\geq h' \right\}&\leq \inf\left\{\sum_{k=j}^{f(j)}\alpha_{k,j}\int_{I_{\Omega,\A}}\A \vert u_{k}'\vert \de x: j\geq h'\right\}\\
&\hskip -0,8cm\leq \inf\left\{\sum_{k=j}^{f(j)}\alpha_{k,j}\left(\int_{I_{\Omega,\A}}\A\vert u' \vert \de x-\delta\right): j\geq h'\right\}\\
&=\int_{I_{\Omega,\A}}\A\vert u' \vert \de x-\delta.
\end{align*}
Then  $\delta\leq \varepsilon$. Since $\varepsilon>0$ is arbitrary, we get that $\delta\leq 0$, and thus a contradiction because $\delta>0$. 
\end{proof}

\appendix
\section{A pairing beyond $BV$}\label{sec:append}
In this section, we recall the notion of pairing $(\A,Du)$ for functions $u$ that may not be of bounded variation, and we introduce the larger space $BV^w(\Omega),$ where this pairing make sense.
In the definition, we will use a precise representative $u^\frac12$ defined for functions \(u\in L^1_{{\rm loc}}(\Omega)\).
\subsection{Precise representatives}
Firstly, we recall some basic definitions and results about the precise representatives of \(u\in L^1_{{\rm loc}}(\Omega)\) (see \cite[Sections 3.6 and 4.5]{AFP}), where $\Omega\subset \R^{n}$
is an open set. 

We say that a function \(u\in L^1_{{\rm loc}}(\Omega)\) has an {\em approximate limit} 
\(z\in\R\) at
$x\in\Omega$ if
\begin{equation*}
\lim_{r\rightarrow0^{+}}\frac{1}{\LL^{n}\left(  B_r(x)\right)}\int_{B_r\left(  
x\right)
}\left|  u(y)  -z  \right|  \,dy=0\,;
\end{equation*}
and we say that $x$ is a {\em Lebesgue point} of $u$.
The set $S_u\subset\Omega$ of points where this property does not hold is called the
{\em approximate discontinuity set} of $u$, and $\LL^{n}(S_u) = 0$.
For any $x\in \Omega \setminus S_u$ the approximate limit $z$ is uniquely 
determined and is denoted by $z=:\tilde{u}(x)$. 
Let $u = \chi_E$, for a measurable set $E\subset\R^n$; in this case the approximate limit at a point $x\in\R^n$ is also called {\em density} of $E$ at $x$, and it is defined  by
\[
D(E;x) := \lim_{r \to 0^+} \frac{\LL^{n}(E\cap B_r(x))}{\LL^{n}(B_r(x))}
\] 
whenever this limit exists. 
%We define {\em measure theoretic interior} $E^1$ of $E$ the set of points with density $1$ and similarly {\em measure theoretic exterior} $E^0$ of $E$ the set of points with density $0$.
%We define {\em measure theoretic boundary} $\partial^*E := S_{\chi_E}$ of $E$ the approximate discontinuity set of $\chi_E$, then $\partial^* E = \R^n \setminus (E^1 \cup E^0)$.

For every Borel function $u : \Omega \to \R$, we denote the {\em sublevel and superlevel sets} of $u$ as
\begin{equation*}
\{u < t\} = \{ x \in \Omega : u(x) < t \} \ \text{ and } \ \{u > t\} = \{ x \in \Omega : u(x) > t \},
\end{equation*}
and we give the definition of the {\em approximate liminf and limsup} at a point $x \in \Omega$  in the following way  
\begin{equation*}
u^-(x) :=
\sup\left\{t\in\overline{\mathbb{R}}\colon
D(\{u < t\};x) = 0\right\},
\quad
u^+(x) :=
\inf\left\{t\in\overline{\mathbb{R}}\colon
D(\{u > t\};x) = 0\right\}
\end{equation*}
(see \cite[Definition 4.28]{AFP}), where $\overline{\mathbb{R}}:=\mathbb{R}\cup\{\pm \infty\}$.
We note that $u^+, u^- : \Omega \to [- \infty, + \infty]$ are Borel functions and the set $S_u^* := \{ x \in \Omega : u^-(x) < u^+(x) \}$ satisfies 
\begin{equation*} 
\LL^{n}(S_u^*) = 0,
\end{equation*} 
so that $u^+(x) = u^-(x)$ for $\LL^{n}$-a.e. $x \in \Omega$ (see \cite[Definition 4.28]{AFP}). 
If $u \in L^{1}_{\rm loc}(\Omega)$, we have
\begin{equation*} 
u^+(x) = u^-(x) = \tilde{u}(x) \ \text{ for all } x \in \Omega \setminus S_u,
\end{equation*} 
and so $S_u^* \subset S_u$.
Therefore, in $\Omega \setminus S_u^*$ we shall write $\tilde{u}(x) := u^+(x) = u^-(x)$, with an abuse of notation.

On the other hand, for every $u \in L^1_{\rm loc}(\Omega)$, we say that \(x\in\Omega\) is an {\em approximate jump point} of \(u\) if
there exist \(a,b\in\R\), \(a\neq b\), and a unit vector \(\nu\in\R^n\) such that 
\begin{equation}\label{f:disc}
\begin{gathered}
\lim_{r \to 0^+} \frac{1}{\LL^{n}(B_r^i(x))}
\int_{B_r^i(x)} |u(y) - a|\, dy = 0,
\\
\lim_{r \to 0^+} \frac{1}{\LL^{n}(B_r^e(x))}
\int_{B_r^e(x)} |u(y) - b|\, dy = 0,
\end{gathered}
\end{equation}
where \(B_r^i(x) := \{y\in B_r(x):\ (y-x)\cdot \nu > 0\}\), and 
\(B_r^e(x) := \{y\in B_r(x):\ (y-x)\cdot \nu < 0\}\).
The triplet \((a,b,\nu)\), uniquely determined by \eqref{f:disc} 
up to a permutation
of \((a,b)\) and a change of sign of \(\nu\),
is denoted by \((\uint(x), \uext(x), \nu_u(x))\).
We observe that
\begin{equation*} 
\umeno(x) =\min\{\uint(x),\uext(x)\} \ \text{ and } \ 
\upiu(x) =\max\{\uint(x),\uext(x)\}
\quad \text{ for all } x\in J_u.
\end{equation*}

Finally, for $u \in L^1_{\rm loc}(\Omega)$ we define the {\em precise representative} of $u$ in $x \in \Omega$ as
\begin{equation*} 
u^{*}(x) := \lim_{r\to0^+}\frac{1}{\LL^{n}\left(  B_r(x)\right)} \int_{B_r(x)}u(y) \, d y,
\end{equation*}
whenever the limit exists. It is then clear that 
\begin{equation}\label{f:pr2}
u^*(x)=
\begin{cases}
\tilde{u}(x) & x\in \Omega \setminus S_u, \\
\displaystyle \frac{u^i(x)+ u^e(x)}{2} & x\in J_u.
\end{cases}
\end{equation} 
A priori, it is not clear whether $u^*$ is well posed in $S_u \setminus J_u$, in general. 
However, 
for $u \in BV_{\rm loc}(\Omega)$, it is well known that we have $\Haus{n-1}(S_u \setminus J_u) = 0$, so that $u^*(x)$ exists for $\Haus{n-1}$-a.e $x \in \Omega$ and, up to a $\Haus{n-1}$-negligible set, is given by \eqref{f:pr2}. 

Finally, for every Borel function we define the {\em representative} $u^{\frac12}: \Omega \to \overline{\R}$ as
\begin{equation}\label{traccia12}
u^{\frac12}(x):= \begin{cases} {\frac12}(\umeno(x)+\upiu(x)) & \text{ if } x \in \Omega \setminus Z_u \\
0 & \text{ if } x \in Z_u  \\
\end{cases} 
\end{equation}
where $Z_u := \{ x \in \Omega:  u^+(x) = + \infty \text{ and } u^-(x) = - \infty \}$
and
\begin{equation*}
u^{\frac12}(x) = \tilde{u}(x) \ \text{ for all } x \in \Omega \setminus S_u^*.
\end{equation*} 

If $u \in L^1_{\rm loc}(\Omega)$, we notice that $u^{\frac12}(x) = \tilde{u}(x)$ for all $x \in \Omega \setminus S_u$
and $u^\frac{1}{2}(x) = u^*(x)$ for all $x \in \Omega \setminus (S_u\setminus J_u)$, but we might have $u^*(x) \neq u^{\frac{1}{2}}(x)$ for some $x \in S_u \setminus J_u$ (see Example in \cite{COM} Sect. 2.2).

\subsection{Pairing in the $n$-dimensional case}
In this subsection, we need to recall a general notion of pairing for divergence measure fields, as introduced in \cite[Section 3]{COM}. 

We define $\cD\MM_{{\rm loc}}^{1}(\Omega)$  as the space of all vector fields $\A\in L_{{\rm loc}}^{1}(\Omega,\R^{n})$ whose divergence $\dive\A$ in the sense of distributions belongs to $\MM_{{\rm loc}}(\Omega)$.
%, acting as
%\begin{align}\label{f:defdive}
%\int_{\Omega}\varphi\, \de\, \dive \A=-\int_{\Omega}\nabla\varphi\cdot  \A \,\de x\hskip 0,1cm \text{\ \ \ \ for all $\varphi\in C_{c}^{\infty}(\Omega)$.}
%\end{align}

First of all, we need suitable ambient classes of summable functions, which naturally depend on the chosen Borel field $w$. Given $\A\in \cD\MM^1_{{\rm loc}}(\Omega)$, we set
\begin{align*}
&X^{\A}(\Omega)\coloneqq \left\{u  \text{ Borel function}: u\in L^{1}(\Omega, \A), u^{\frac12} \in L^{1}(\Omega, \vert \dive \A\vert)
\right\},\\
&X_{{\rm loc}}^{\A}(\Omega)\coloneqq \left\{u  \text{ Borel function}: u\in L_{{\rm loc}}^{1}(\Omega, \A), u^{\frac12} \in L_{{\rm loc}}^{1}(\Omega, \vert \dive \A\vert)
\right\}.
\end{align*}
We now recall the definition of pairing for functions in $X_{{\rm loc}}^{\A}(\Omega)$.
\begin{defi}
Let $\A\in \cD\MM_{{\rm loc}}^{1}(\Omega)$, and $u\in X_{\rm loc}^{\A}(\Omega)$. We define the pairing between $\A$ and $u$ as the distribution
\begin{align*}
(\A,Du): C_{c}^{\infty}(\Omega)\rightarrow \R
\end{align*}
acting as
\begin{align}\label{def:pairing}
\nm{(\A,Du),\varphi}\coloneqq -\int_{\Omega}u^{\frac12} \varphi\,\de \dive\A- \int_{\Omega} u \nabla\varphi\cdot\A\,\de x \hskip 0,1cm \text{\ \ \ \ \ for $\varphi\in C_{c}^{\infty}(\Omega)$.}
\end{align}
\end{defi}

\subsection{Pairing in the one-dimensional case}
Let us notice that for $n=1$, $\Omega\subset \R$ and we have $\dive\A=D\A$. Furthermore, we have that
\begin{align*}
%\cD\MM(\Omega)=
\cD\MM^{1}_{\rm loc}(\Omega)={\rm BV}_{\rm loc}(\Omega) \subset L^\infty_{\rm loc}(\Omega)
\end{align*}
and 
\begin{align}\label{def:pairing2}
\nm{(\A,Du),\varphi}= -\int_{\Omega}u^{\frac12} \varphi\,\de D\A- \int_{\Omega} u \nabla\varphi\cdot\A\,\de x \hskip 0,1cm \text{\ \ \ \ \ for $\varphi\in C_{c}^{\infty}(\Omega)$.}
\end{align}
Then, recalling that if $u\in {\rm BV}_{\rm loc}(\Omega)$, then $u^*(x)=u^\frac12(x)$ for every $x\in\Omega$ and by Proposition 3.5 (3) in \cite{COM} since ${\rm BV}_{\rm loc}(\Omega)\subset L^\infty_{\rm loc}(\Omega)$
$$
\{ u \in {\rm BV}_{\rm loc}(\Omega) : u^\frac12 \in L^1_{\rm loc}(\Omega, |D \A|) \}={\rm BV}_{\rm loc}(\Omega).
$$
We will also need the following classes of functions which are the analogue of ${\rm BV}$-type functions when working with the pairing.
\begin{defi}
Given  $\A\in {\rm BV}_{\rm loc}(\Omega)$, we define the class
\begin{align*}
\begin{aligned}
&{\rm BV}^{\A}(\Omega)\coloneqq \left\{u\in X^{\A}(\Omega):(\A,Du)\in \MM(\Omega)\right\},\\
&{\rm BV}_{\rm loc}^{\A}(\Omega)\coloneqq \left\{u\in X_{\rm loc}^{\A}(\Omega):(\A,Du)\in \MM_{\rm loc}(\Omega)\right\}.
\end{aligned}
\end{align*}
\end{defi}
\begin{oss}\label{contained1}
By Proposition 3.5 in \cite{COM} since $\A \in L_{\rm loc}^\infty(\Omega)$ , then 
\begin{equation*} 
{\rm BV}_{\rm loc}(\Omega) \subseteq BV_{\rm loc}^{\A}(\Omega).
\end{equation*}
\end{oss}

\begin{oss}
As noted in \cite[Remark 3.4]{COM}, the set ${\rm BV}^{\A}_{\rm loc}(\Omega)$ is not a linear space. This is due to the fact that the pairing is, in fact, a nonlinear operation in the second component, representing a departure from the classical ${\rm BV}$-setup. 
Nevertheless, if $w\in W^{1,1}_{\rm{loc}}(\Omega)$, then ${\rm BV}^{\A}_{\rm loc}(\Omega)$ is a linear space (see Corollary 5.3 in \cite{COM}).
\end{oss}

Since the pairing $(\A,Du)$ is affected by the pointwise value of $u^{\frac12}$, then a suitable notion of convergence involving these representatives is introduced in \cite{COM}.

\begin{defi}\label{atwo}
Let $\A\in {\rm BV}_{\rm loc}(\Omega)$. We say that a sequence $(u_{n})_{n\in \bbN}\subset X_{{\rm loc}}^{\A}(\Omega)$ $(\A,\frac12)$-converges to $u\in X_{{\rm loc}}^{\A}(\Omega)$ if
\begin{itemize}
\item[(i)] $u_{n}\weak u $ in $L_{\rm loc}^{1}(\Omega,\A)$,
\item[(ii)] $u_{n}^{\frac12} \weak u^{\frac12} $ in $L_{{\rm loc}}^{1}(\Omega,\vert {D}\A\vert)$.
\end{itemize}
\end{defi}
When $\A\in {\rm W}^{1,1}_{\rm loc}(\Omega)$, then (ii) is equivalent to
$u_{n} \weak u $ in $L_{{\rm loc}}^{1}(\Omega,\vert {D}\A\vert).$

The following lower semicontinuity of the pairing holds true.

\begin{teo}[{\cite[Theorem 4.3]{COM}}]\label{lscpairing}
Let $\A\in {\rm BV}_{\rm loc}(\Omega)$. Then for every sequence $(u_{n})_{n\in \bbN} \subset X_{{\rm loc}}^{\A}(\Omega)$ and for every $u\in  X_{{\rm loc}}^{\A}(\Omega) $, and such that $(u_{n})_{n}$\, $(\A,\frac12)$-converges to $u$, it holds 
\begin{align*}
\nm{(\A,Du),\varphi}=\lim_{n\rightarrow +\infty}\nm{(\A,Du_{n}),\varphi} \hskip 0,3cm \text{ for all $\varphi\in C_{c}^{1}(\Omega)$}
\end{align*}
in the sense of distributions. Further, if $u,u_{n}\in {\rm BV}_{{\rm loc}}^{\A}(\Omega)$ for all $n\in \bbN$, then
\begin{align}\label{sci}
\left\vert (\A,Du)\right\vert(\Omega)\leq \liminf_{n\rightarrow +\infty}\left\vert (\A,Du_{n})\right\vert(\Omega).
\end{align} 
If
\begin{align*}\sup_{n\in \bbN}\left\vert (\A,Du_{n})\right\vert(\Omega)<+\infty,
\end{align*}
we get 
\begin{align*}
\left\vert (\A,Du_{n})\right\vert(\Omega)\weak \left\vert (\A,Du)\right\vert(\Omega)
\end{align*}
weakly in the sense of measures.
\end{teo}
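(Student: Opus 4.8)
The plan is to establish the three assertions in the order stated, since each builds on the previous one.

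First I would prove the distributional convergence $\nm{(\A,Du),\varphi}=\lim_{n}\nm{(\A,Du_{n}),\varphi}$. Fixing $\varphi\in C_{c}^{1}(\Omega)$ with $K:=\supp\varphi\Subset\Omega$, I would split the pairing \eqref{def:pairing2} into its two constituent terms and pass to the limit in each separately. For the absolutely continuous term $-\int_{\Omega}u_{n}\,\varphi'\,\A\,\de x$, I note that $\varphi'\in C_{c}(K)\subset L^{\infty}(K)$, so it is an admissible test function against the weak convergence (i) $u_{n}\weak u$ in $L_{\rm loc}^{1}(\Omega,\A)$, giving convergence to $-\int_{\Omega}u\,\varphi'\,\A\,\de x$. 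For the measure term $-\int_{\Omega}u_{n}^{\frac12}\,\varphi\,\de D\A$, I would use the polar decomposition $\de D\A=\sigma\,\de|D\A|$ with $|\sigma|=1$ $|D\A|$-a.e., rewriting the integral as $-\int_{\Omega}u_{n}^{\frac12}(\varphi\sigma)\,\de|D\A|$; since $\varphi\sigma\in L^{\infty}(K,|D\A|)$, this is an admissible pairing against the weak convergence (ii) $u_{n}^{\frac12}\weak u^{\frac12}$ in $L_{\rm loc}^{1}(\Omega,|D\A|)$, giving convergence to $-\int_{\Omega}u^{\frac12}\varphi\,\de D\A$. This step is essentially automatic once one observes that the two components of $(\A,\frac12)$-convergence are tailored precisely to the two terms of the pairing.

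Next, assuming $u,u_{n}\in{\rm BV}_{\rm loc}^{\A}(\Omega)$, I would deduce the lower semicontinuity \eqref{sci} from the variational characterization of the total variation of a Radon measure,
$$\left\vert(\A,Du)\right\vert(\Omega)=\sup\Big\{\nm{(\A,Du),\varphi}:\varphi\in C_{c}^{1}(\Omega),\ \|\varphi\|_{\infty}\le1\Big\},$$
which holds because $(\A,Du)\in\MM(\Omega)$ and $C_{c}^{1}(\Omega)$ is dense in $C_{c}(\Omega)$ in the uniform norm with equibounded supports. For each admissible $\varphi$, the first step yields $\nm{(\A,Du),\varphi}=\lim_{n}\nm{(\A,Du_{n}),\varphi}$, while $\|\varphi\|_{\infty}\le1$ forces $\nm{(\A,Du_{n}),\varphi}\le\left\vert(\A,Du_{n})\right\vert(\Omega)$; hence $\nm{(\A,Du),\varphi}\le\liminf_{n}\left\vert(\A,Du_{n})\right\vert(\Omega)$, and taking the supremum over $\varphi$ gives \eqref{sci}.

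Finally, under the uniform bound $\sup_{n}\left\vert(\A,Du_{n})\right\vert(\Omega)<+\infty$, I would obtain the announced weak convergence of the measures by a compactness-and-identification argument. The pairings $(\A,Du_{n})$ form a bounded sequence of Radon measures, so by sequential weak-$*$ compactness every subsequence admits a further subsequence converging weakly-$*$ to some measure $\mu$; by the first step, $\nm{\mu,\varphi}=\nm{(\A,Du),\varphi}$ for all $\varphi\in C_{c}^{1}(\Omega)$, and density identifies $\mu=(\A,Du)$, so that the full sequence satisfies $(\A,Du_{n})\weak(\A,Du)$ weakly in the sense of measures. The corresponding statement for the total variations $\left\vert(\A,Du_{n})\right\vert\weak\left\vert(\A,Du)\right\vert$ would then combine this weak-$*$ convergence with the lower semicontinuity of the previous step. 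The main obstacle lies in this last upgrade: lower semicontinuity only gives $\left\vert(\A,Du)\right\vert\le\lambda$ for any weak-$*$ limit $\lambda$ of $\left\vert(\A,Du_{n})\right\vert$, and the reverse inequality — equivalently, convergence of the total masses $\left\vert(\A,Du_{n})\right\vert(\Omega)\to\left\vert(\A,Du)\right\vert(\Omega)$ — requires ruling out cancellation in the limit. This is exactly where the full strength of the $(\A,\frac12)$-convergence, in particular condition (ii) on the precise representatives $u_{n}^{\frac12}$ that controls the singular part of the pairing carried by $D\A$, must be exploited, rather than merely the weak-$*$ convergence of the signed measures.
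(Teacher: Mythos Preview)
This theorem is not proved in the paper: it is quoted from \cite[Theorem 4.3]{COM} and stated in the appendix without proof, so there is no in-paper argument to compare against. Your reconstruction of the first two assertions is correct and is the standard one --- the two ingredients of the $(\A,\tfrac12)$-convergence are designed exactly to pass to the limit in the two terms of \eqref{def:pairing2}, and the lower semicontinuity \eqref{sci} then follows from the sup-characterisation of the total variation.

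For the third assertion there is a genuine issue, but it lies in the \emph{statement} as transcribed in the paper rather than in your argument. As written, the conclusion ``$|(\A,Du_{n})|(\Omega)\weak|(\A,Du)|(\Omega)$ weakly in the sense of measures'' is ill-formed (these are numbers, not measures); the intended conclusion is almost certainly $(\A,Du_{n})\weak(\A,Du)$ weakly-$*$ in $\MM_{\rm loc}(\Omega)$, which is exactly what your compactness-and-identification argument proves. Your concern about upgrading to weak-$*$ convergence of the total variation measures $|(\A,Du_{n})|\weak|(\A,Du)|$ is entirely justified: this does \emph{not} follow from the hypotheses. A simple counterexample is $\A\equiv1$ on $\Omega=(0,1)$ (so $D\A=0$ and condition (ii) is vacuous), $u_{n}(x)=\sin(n\pi x)/n$; then $u_{n}\weak 0$ in $L^{1}$, but $|(\A,Du_{n})|=\pi|\cos(n\pi x)|\,\de x\weak 2\,\de x\neq 0=|(\A,Du)|$. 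So your final paragraph is chasing a claim that is both not provable and not what was meant; you may stop once you have established $(\A,Du_{n})\weak(\A,Du)$.
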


\section{Weighted ${\rm BV}$-spaces}\label{baldibv}
In this part, for the sake of completeness, we recall the definition of weighted $\BV(\Omega;\A)$-spaces introduced in \cite{Baldi}, where the weight $\A$ belongs to the global Muckenhoupt's $A_{1}\coloneqq A_{1}(\Omega)$.  Suppose that $\Omega$ is an open subset of $\R$, and let $\Omega_{0}$ be a neighborhood of $\overline{\Omega}$.
\begin{defi}\label{first:A1}
 Let $\A\in L_{{\rm loc}}^{1}(\Omega_{0})$, $\A>0$. We say that $\A\in A_{1}$ if there exists a constant $c>0$ such that 
\begin{align}\label{mucke1}
 \A(x)\geq c \displaystyle\fint_{B(x,r)}\A(y)\de y \hskip 0,1cm \text{a.e. in any ball $B(x,r)\subset \Omega_{0}$.}
\end{align}
\end{defi}
In\cite{Baldi}, given $u\in L^{1}(\Omega;\A)$, the weighted total variation of~$u$ with respect to~$\A$  is defined as
\begin{equation*}%\label{weightototal}
TV(u;\A)\coloneqq\sup\bigg\{\int_{\Omega}u \phi' \,\de x:  \phi\in C_{c}^{1}(\Omega;\R),\vert\phi(x) \vert\leq \A(x)  \text{ for all $x\in \Omega$} \bigg\}.
\end{equation*}
We denote $\BV(\Omega;\A)$ the set of all functions $u\in L^{1}(\Omega;\A)$ for which $TV(u;\A)<+\infty$, and we equipp it with the norm

\begin{align*}
\norma{u}_{\BV(\Omega,\A)}\coloneqq \norma{u}_{L^{1}(\Omega;\A)} + TV(u;\A).
\end{align*}
In particular, when $\A\equiv 1$ we recover the usual space~$\BV(\Omega)$. 
For a measurable set $B\subset \Omega$, we then define the perimeter in~$\Omega$ as the weighted total variation of the characteristic function of~$B$, that is, $\PR(B;\A) \coloneqq TV(\chi_{B};\A)$. 
\begin{oss}
Let us recall that in the definition of weighted Sobolev spaces, the weight is usually defined a.e. (almost everywhere) because functions in these spaces have derivatives that, as measures, 
are absolutely continuous with respect to the Lebesgue measure. 
Nevertheless, in the case of weighted $\BV$-spaces, the situation is completely different. 
Indeed, derivatives can be concentrated on sets of null Lebesgue measure. A proper definition of a weighted $\BV$-space requires a pointwise definition of $\A$. 
In fact, requiring that $\A \in A_{1}$ reflects this, as it captures a pointwise definition in each ball $B(x,r)$ for which the inequality \eqref{mucke1} holds.
\end{oss}
In \cite{Baldi}, it is shown that it not necessary to assume that $\A$ is lower semicontinuous to define a weighted Sobolev space. However, in the case where $\A \in A_{1}$, it is possible to show that we can find an auxiliary weight $\A^{\ast}$ that is lower semicontinuous and such that $\BV(\Omega; \A) = \BV(\Omega; \A^{\ast})$.

\begin{lem}[{\cite[Lemma 3.1]{Baldi}}]\label{equivalentbv}
Suppose that $\A\in A_{1}$. The following assertions hold true.
\begin{enumerate}
\item  Let us set $L_{0}(\Omega,\R)$ the set of Lipschitz continuous functions with compact support. Define

\begin{align*}
\A^{\ast}\coloneqq \sup_{\stackrel{\phi\in L_{0}(\Omega,\R)}{\vert\phi \vert\leq \A}} \vert \phi\vert.
\end{align*}
Then $\BV(\Omega; \A) = \BV(\Omega; \A^{\ast})$.
\item  Let us consider the relaxed function $\A^{\ast\ast}$ associated to $\A$, that is,

\begin{align*}
\A^{\ast\ast}\coloneqq \sup\left\{g: \text{$g:\Omega\rightarrow (0,+\infty)$ is lower semicontinuous, and $g\leq \A$}\right\}.
\end{align*}
Then $\A^{\ast\ast}=\A^{\ast}$ in $\Omega$, and $\BV(\Omega; \A) = \BV(\Omega; \A^{\ast})= \BV(\Omega; \A^{\ast\ast})$.
\item $\A^{\ast\ast}\in A_{1}$.
\end{enumerate}
\end{lem}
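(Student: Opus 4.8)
The plan is to prove the three assertions in the order: (2), then the comparison $c\,\A\le\A^{\ast}\le\A$ a.e.\ (the only place where the $A_{1}$ hypothesis is genuinely used), then (1) and (3). First I record the elementary facts about $\A^{\ast}$. Being a supremum of continuous functions, $\A^{\ast}$ is lower semicontinuous; and by separability of $C(K)$ for $K\Subset\Omega$, the function $\A^{\ast}$ is locally a \emph{countable} supremum of admissible $|\phi|$'s, whence $\A^{\ast}\le\A$ a.e. In particular $\A^{\ast}$ is itself one of the competitors in the definition of $\A^{\ast\ast}$ (positivity of the competitors is harmless: it will hold a posteriori, since we shall see $\A^{\ast}\ge c\,\A>0$ a.e.), so $\A^{\ast}\le\A^{\ast\ast}$. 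For the reverse inequality I would use that every lower semicontinuous $g$ is the pointwise increasing limit of its Moreau--Yosida minorants $g_{n}(x):=\inf_{y\in\Omega}\big(g(y)+n|x-y|\big)$, which are Lipschitz; multiplying by cut-offs along a compact exhaustion of $\Omega$ and replacing $g_{n}$ by $g_{n}^{+}\wedge g$ one gets compactly supported Lipschitz functions with $0\le g_{n}\le g\le\A$ a.e., hence admissible in the supremum defining $\A^{\ast}$. Thus $\A^{\ast}\ge g_{n}$ for all $n$, so $\A^{\ast}\ge g$; taking the supremum over all such $g$ gives $\A^{\ast}\ge\A^{\ast\ast}$, and therefore $\A^{\ast}=\A^{\ast\ast}$ on $\Omega$.

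\emph{The comparison from $A_{1}$.} First I rewrite \eqref{mucke1}: it says $\A(x)\ge c\fint_{B}\A$ for a.e.\ $x\in B$, equivalently the self-improved bound $\fint_{B}\A\le c^{-1}\operatorname*{ess\,inf}_{B}\A$ for every ball $B\subset\Omega_{0}$. Set $\A_{\ast}(x):=\lim_{r\to0^{+}}\operatorname*{ess\,inf}_{B(x,r)}\A$ (an increasing limit); it is lower semicontinuous, and since at every Lebesgue point $\operatorname*{ess\,inf}_{B(x,r)}\A\le\fint_{B(x,r)}\A\to\A(x)$ we have $\A_{\ast}\le\A$ a.e.; approximating $\A_{\ast}$ from below by Lipschitz minorants as above gives $\A_{\ast}\le\A^{\ast}$. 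On the other hand, at a.e.\ $x$ (a Lebesgue point at which \eqref{mucke1} holds for all small balls) the self-improved inequality yields $\A(x)=\lim_{r}\fint_{B(x,r)}\A\le c^{-1}\lim_{r}\operatorname*{ess\,inf}_{B(x,r)}\A=c^{-1}\A_{\ast}(x)$. Combining,
\begin{equation*}
c\,\A(x)\ \le\ \A^{\ast}(x)=\A^{\ast\ast}(x)\ \le\ \A(x)\qquad\text{for a.e.\ }x\in\Omega .
\end{equation*}

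\emph{Assertions (1) and (3).} From $\A^{\ast}\le\A$ a.e.\ one gets $L^{1}(\Omega;\A)\subseteq L^{1}(\Omega;\A^{\ast})$ and that any $\phi$ admissible for $TV(\cdot;\A^{\ast})$ is admissible for $TV(\cdot;\A)$, so $TV(u;\A^{\ast})\le TV(u;\A)$. From $c\,\A\le\A^{\ast}$ a.e.\ one gets $L^{1}(\Omega;\A^{\ast})\subseteq L^{1}(\Omega;\A)$, and if $\phi$ is admissible for $TV(\cdot;\A)$ then $c\phi$ is admissible for $TV(\cdot;\A^{\ast})$, so $TV(u;\A)\le c^{-1}TV(u;\A^{\ast})$. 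Hence $L^{1}(\Omega;\A)=L^{1}(\Omega;\A^{\ast})$ and the two total variations are comparable, i.e.\ $\BV(\Omega;\A)=\BV(\Omega;\A^{\ast})$; together with $\A^{\ast}=\A^{\ast\ast}$ this is (1) and the $\BV$-part of (2). For (3), using $c\,\A\le\A^{\ast\ast}\le\A$ a.e.\ with the self-improved $A_{1}$ inequality for $\A$, for every ball $B\subset\Omega_{0}$
\begin{equation*}
\fint_{B}\A^{\ast\ast}\ \le\ \fint_{B}\A\ \le\ c^{-1}\operatorname*{ess\,inf}_{B}\A\ \le\ c^{-2}\operatorname*{ess\,inf}_{B}\A^{\ast\ast},
\end{equation*}
so $\A^{\ast\ast}\in A_{1}$ with constant $c^{2}$ (and $\A^{\ast\ast}>0$ a.e.\ since $\A^{\ast\ast}\ge c\,\A$).

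\emph{Main obstacle.} The substantial step is the comparison paragraph: converting the a.e.\ Muckenhoupt inequality into the pointwise-a.e.\ two-sided bound $c\,\A\le\A_{\ast}\le\A^{\ast}\le\A$. This relies on the self-improvement $\fint_{B}\A\le c^{-1}\operatorname*{ess\,inf}_{B}\A$, on the identification of $\lim_{r}\operatorname*{ess\,inf}_{B(x,r)}\A$ with the lower semicontinuous function $\A_{\ast}$ at Lebesgue points, and on placing $\A_{\ast}$ below $\A^{\ast}$ — for which the Moreau--Yosida approximation of lower semicontinuous functions by compactly supported Lipschitz minorants (used twice) is the needed, if routine, technical ingredient. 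Everything else is bookkeeping once $\A$ and $\A^{\ast}=\A^{\ast\ast}$ are known to be comparable.
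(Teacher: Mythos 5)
The paper itself offers no proof of this lemma: it is quoted directly from \cite[Lemma 3.1]{Baldi}, so there is no in-paper argument to measure yours against. On its own terms, your outline assembles the right ingredients: the lower semicontinuous envelope realized as a supremum of compactly supported Lipschitz (Moreau--Yosida) minorants, the self-improved form $\fint_{B}\A\,\de x\le c^{-1}\operatorname*{ess\,inf}_{B}\A$ of \eqref{mucke1}, and the resulting two-sided comparison $c\,\A\le\A^{\ast}=\A^{\ast\ast}\le\A$ a.e., after which (1), the $\BV$-part of (2), and (3) are indeed bookkeeping. Your auxiliary weight $\A_{\ast}(x)=\lim_{r\to0^{+}}\operatorname*{ess\,inf}_{B(x,r)}\A$ is the natural dual of the maximal-type weight $\widetilde{\A}(x)=\sup_{r>0}\fint_{B(x,r)}\A\,\de y$ that the paper introduces immediately after the lemma and which is closer to Baldi's own route ($\widetilde{\A}$ is lower semicontinuous, is pinched between $\A$ and $c^{-1}\A$ a.e., and is again $A_{1}$); the two choices are essentially interchangeable here, and your Lindel\"of/separability argument for $\A^{\ast}\le\A$ a.e.\ is correct.

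There is, however, one step I would not accept as written: the inequality $\A_{\ast}\le\A^{\ast}$. Your Lipschitz minorants $g_{n}$ satisfy $g_{n}\le\A_{\ast}$ everywhere, but $\A_{\ast}\le\A$ only at Lebesgue points, so $g_{n}\le\A$ only a.e.; meanwhile the admissibility constraint in the definition of $\A^{\ast}$ (and in $TV(\cdot;\A)$ as the paper writes it) reads ``$|\phi(x)|\le\A(x)$ for all $x$''. This is not a pedantic distinction: if the constraint is genuinely pointwise while \eqref{mucke1} holds only a.e., the lemma fails — take $\A\equiv1$ off a dense null set $\{q_{k}\}$ and $\A(q_{k})=2^{-k}$; then $\A\in A_{1}$ with $c=1$, yet every continuous $\phi$ with $|\phi|\le\A$ everywhere vanishes identically, so $\A^{\ast}\equiv0$ while $\A=1$ a.e. You must therefore either declare explicitly that all constraints $|\phi|\le\A$, $g\le\A$ are meant $\LL^{n}$-a.e.\ (under which convention your argument closes), or first replace $\A$ by a representative satisfying \eqref{mucke1} at every point — which is essentially what the lemma is meant to produce, so it cannot be presupposed. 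The same convention issue infects your ``a posteriori'' justification that $\A^{\ast}>0$ everywhere (needed to make $\A^{\ast}$ a competitor for $\A^{\ast\ast}$): under the a.e.\ convention it is easily repaired, since $A_{1}$ forces $\A\ge\delta>0$ a.e.\ on every ball and a small bump is then admissible, but as stated it is circular. Fix the convention and the crux paragraph — which you correctly identify as the only substantial step — and the proof stands.
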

Let us set 
\begin{align*}
\widetilde{\A}(x)\coloneqq \sup_{r>0}\displaystyle\fint_{B(x,r)} \A(y)\de y.
\end{align*}
Since $\A\in A_{1}$, note  that  $\widetilde{\A}\in A_{1}$ with the same constant $c>0$. Indeed, observe that

\begin{align*}
\displaystyle\fint_{B(x,r)}\widetilde{\A}(y)\de y\leq \frac{1}{c}\displaystyle\fint_{B(x,r)}\A(y)\de y\leq \frac{1}{c}\widetilde{\A}(x).
\end{align*}
Furthermore, since the integral is a continuous operation, then by taking the supremum of continuous functions we obtain a lower semicontinuous function, and $\widetilde{\A}>0$. Hence, in order to obtain suitable density results, it is customary to replace $\A$ with an appropriate lower semicontinuous function when defining weighted $\BV$-spaces.

\begin{defi}
 Let $\A\in A_{1}$, and define $A_{1}^{\ast}$ as
\begin{align*}
A_{1}^{\ast}\coloneqq \left\{\A\in A_{1}: \text{$\A$ is lower semicontinuous, and condition $A_{1}$ is satisfied at any point}\right\}.
\end{align*}
\end{defi}
The following holds true.

\begin{prop}[{\cite[Theorem 3.3]{Baldi}}]
Let $\A\in A_{1}^{\ast}$, and $u\in \BV(\Omega;\A)$. Then there exist a finite Radon measure $\vert Du\vert_{\A}$ and a $\vert Du\vert_{\A}$-measurable function $\sigma\colon \Omega\rightarrow \R$ such that $\vert \sigma(x)\vert=1$ for $\vert Du\vert_{\A}$-almost every~$x\in \Omega$ and such that
\begin{align}\label{divthm}
\int_{\Omega} u(x) \phi'(x)\,\de x=-\int_{\Omega}\frac{\phi(x)\sigma(x)}{\A(x)}\,\de\vert Du\vert_{\A}(x).
\end{align}
The measure~$\vert Du\vert_{\A}$ and the function~$\sigma$ are uniquely determined by \eqref{divthm} and the weighted total variation $TV(u;\A)$ is equal to $\vert Du\vert_{\A}(\Omega)$.
\end{prop}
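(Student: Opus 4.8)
The plan is to reduce the statement to the classical polar decomposition of the distributional derivative of a locally $\BV$ function, and then to re-weight that decomposition by the lower semicontinuous representative of $\A$ guaranteed by the hypothesis $\A\in A_{1}^{\ast}$.

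First I would exploit $\A\in A_{1}^{\ast}$: the weight is lower semicontinuous, strictly positive and defined at \emph{every} point, and the $A_{1}$ inequality bounds $\A$ from below on each ball by a fixed multiple of its average, so that $\tfrac1\A\in L^{\infty}_{\rm loc}(\Omega)$. Consequently $u\in L^{1}(\Omega;\A)$ forces $u\in L^{1}_{\rm loc}(\Omega)$, and for any $\phi\in C^{1}_{c}(\Omega)$ supported in a compact set $K$ one has $|\phi|\le\A$ on $K$ as soon as $\|\phi\|_{\infty}\le m_{K}:=\min_{K}\A$, the latter being strictly positive because $\A$ is lower semicontinuous and positive. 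By the definition of $TV(u;\A)$ and homogeneity this gives $\big|\int_{\Omega}u\phi'\,\de x\big|\le \tfrac{TV(u;\A)}{m_{K}}\|\phi\|_{\infty}$, whence $u\in\BV_{\rm loc}(\Omega)$ and $Du$ is a locally finite Radon measure. I would then take its polar decomposition $Du=\sigma\,|Du|$, with $|\sigma|=1$ holding $|Du|$-a.e., and adopt this $\sigma$ as the function in the statement.

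Next I would define the measure by $\de|Du|_{\A}:=\A\,\de|Du|$, using the lower semicontinuous (hence Borel and $|Du|$-measurable) representative of $\A$; this is precisely the step where the pointwise, rather than merely a.e., validity of $A_{1}$ is indispensable, since $|Du|$ may be concentrated on Lebesgue-null sets. The representation \eqref{divthm} is then immediate from the $\BV_{\rm loc}$ integration-by-parts formula: for $\phi\in C^{1}_{c}(\Omega)$,
\[
\int_{\Omega}u\,\phi'\,\de x=-\int_{\Omega}\phi\,\de Du=-\int_{\Omega}\phi\,\sigma\,\de|Du|=-\int_{\Omega}\frac{\phi\,\sigma}{\A}\,\de|Du|_{\A}.
\]
The identity $TV(u;\A)=|Du|_{\A}(\Omega)=\int_{\Omega}\A\,\de|Du|$ carries the real content and is the main obstacle. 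The inequality $\le$ is elementary, since for admissible $\phi$ one has $-\int_{\Omega}\phi\sigma\,\de|Du|\le\int_{\Omega}|\phi|\,\de|Du|\le\int_{\Omega}\A\,\de|Du|$. For the reverse inequality I must approximate the optimal but nonsmooth competitor $-\sigma\A$ by admissible fields: I would use that a nonnegative lower semicontinuous $\A$ is an increasing limit of continuous functions $g_{k}\uparrow\A$ with $g_{k}\le\A$ (so the constraint $|\phi|\le\A$ is preserved), approximate the sign $\sigma$ by continuous functions in $L^{1}(|Du|)$ via Lusin's theorem, multiply, mollify, and pass to the limit by monotone and dominated convergence. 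This is exactly the duality $\int_{\Omega}\A\,\de|\lambda|=\sup\{-\int_{\Omega}\phi\,\de\lambda:\phi\in C_{c}(\Omega),\ |\phi|\le\A\}$ for a Radon measure $\lambda$ and a lower semicontinuous weight $\A\ge 0$, applied to $\lambda=Du$; maintaining the constraint under mollification, where $\A$ is only lower semicontinuous, is the delicate point. Since $u\in\BV(\Omega;\A)$, the resulting value $\int_{\Omega}\A\,\de|Du|=TV(u;\A)$ is finite, so $|Du|_{\A}$ is a finite measure and the relations $\A<\infty$ and $|\sigma|=1$ hold $|Du|_{\A}$-a.e.

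Finally, uniqueness follows by a separation argument. If $(\mu,\sigma)$ and $(\mu',\sigma')$ both satisfy \eqref{divthm}, then $\int_{\Omega}\tfrac{\phi\sigma}{\A}\,\de\mu=\int_{\Omega}\tfrac{\phi\sigma'}{\A}\,\de\mu'=\int_{\Omega}\phi\,\de Du$ for all $\phi\in C^{1}_{c}(\Omega)$; since $C^{1}_{c}(\Omega)$ is uniformly dense in $C_{c}(\Omega)$ and all the measures involved are Radon, the signed measures $\tfrac{\sigma}{\A}\mu$ and $\tfrac{\sigma'}{\A}\mu'$ both coincide with $Du$. Taking total variations and using $|\sigma|=|\sigma'|=1$ and $\A>0$ yields $\tfrac1\A\mu=|Du|=\tfrac1\A\mu'$, hence $\mu=\A\,|Du|=\mu'$, and then $\sigma=\sigma'$ holds $\mu$-a.e.
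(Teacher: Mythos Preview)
The paper does not prove this proposition: it is quoted from \cite[Theorem~3.3]{Baldi} in Appendix~B and no argument is supplied, only the subsequent remark that $|Du|_\A=\A\,|Du|$. There is therefore nothing in the present paper to compare your proof against.

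On its own merits your reconstruction is correct and follows the route one would expect in the cited reference: lower semicontinuity and strict positivity of $\A$ give $1/\A\in L^\infty_{\rm loc}$, whence $u\in\BV_{\rm loc}(\Omega)$; the classical polar decomposition $Du=\sigma\,|Du|$ furnishes $\sigma$; setting $|Du|_\A:=\A\,|Du|$ with the pointwise l.s.c.\ representative of $\A$ yields \eqref{divthm}; and the identity $TV(u;\A)=\int_\Omega\A\,\de|Du|$ is obtained by duality. You are right to flag the passage from $C_c$ to $C^1_c$ competitors under the pointwise constraint $|\phi|\le\A$ as the delicate step; one clean way to close it, avoiding the mollification issue you mention, is to note that any l.s.c.\ function is the pointwise supremum of \emph{smooth} functions below it (approximate from below by continuous $g_k$, then mollify $g_k-\varepsilon_k$ to obtain $h_k\in C^\infty$ with $h_k\le g_k\le\A$), so that the competitor $-\tilde\sigma\,h_k$ is already $C^1_c$ once the approximation $\tilde\sigma$ of $\sigma$ is, and no further regularization under the constraint is required. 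Your uniqueness argument is fine.
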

Note that, using \eqref{divthm}, one can check that $\vert Du\vert_{\A}= \A \vert Du\vert$, so that
\begin{align*}
TV(u;\A)= \int_{\Omega}\A(x) \,\de \vert Du\vert(x).
\end{align*}
Since the functional $TV(\cdot;\A)$ is defined as the supremum of linear continuous functionals in $L^{1}(\Omega;\A)$, it is lower semicontinuous with respect to the $L^{1}(\Omega;\A)$ metric. The following density theorem for weighted {\rm BV} functions holds true.
\begin{teo}[{\cite[Theorem 3.4]{Baldi}}]\label{densbv}
Let $\Omega$ be an open subset of $\R$ with Lipschitz boundary. Suppose that $\A\in {\rm Lip}(\Omega)$, and $\A\in A_{1}$. 
Then for every $u\in {\rm BV}(\Omega;\A)$ there exists a sequence $\{u_{n}\}_{n\in\bbN}\subseteq C_{c}^{\infty}(\R)$ 
such that $u_{n}\rightarrow u$ in $L^{1}(\Omega)$ and $\int_{\Omega}\vert u_{n}' \vert \A \,\de x \rightarrow TV(u;\A)$ as $n\rightarrow \infty$.
\end{teo}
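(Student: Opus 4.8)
The plan is to prove the two-sided estimate characterising strict convergence: construct a mollified recovery sequence realising the upper bound $\limsup_n\int_\Omega|u_n'|\A\,\de x\le TV(u;\A)$, and obtain the matching lower bound for free from the lower semicontinuity of $TV(\cdot;\A)$ already recorded above. First I would extract the structural consequences of the hypotheses. Since $\A\in{\rm Lip}(\Omega)$ and $\Omega$ is bounded, $\A$ extends continuously to $\overline\Omega$ and is bounded above; since $\A$ is continuous and $\A\in A_{1}$, the $A_{1}$ inequality forces $\A>0$ at every point (a zero of $\A$ would contradict $\A(x)\ge c\fint_{B(x,r)}\A\,\de y>0$), whence $\A\ge c_{0}>0$ on $\overline\Omega$. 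Consequently the norms of $L^{1}(\Omega)$ and $L^{1}(\Omega;\A)$ are equivalent, so the two convergences coincide; moreover $|Du|(\Omega)\le c_{0}^{-1}TV(u;\A)<+\infty$, so $u\in BV(\Omega)$ in the classical sense and $TV(u;\A)=\int_{\Omega}\A\,\de|Du|$. Because $TV(\cdot;\A)$ is a supremum of $L^{1}(\Omega;\A)$-continuous linear functionals it is $L^{1}(\Omega;\A)$-lower semicontinuous, so any sequence $u_{n}\to u$ in $L^{1}(\Omega)$ automatically satisfies $TV(u;\A)\le\liminf_{n}\int_{\Omega}|u_{n}'|\A\,\de x$. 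It therefore suffices to produce $u_{n}\in C_{c}^{\infty}(\R)$ with $u_{n}\to u$ in $L^{1}(\Omega)$ and $\limsup_{n}\int_{\Omega}|u_{n}'|\A\,\de x\le TV(u;\A)$.

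For the recovery sequence I would mollify. Extend $\A$ to a bounded Lipschitz function on $\R$ (McShane extension truncated at the levels $c_{0}$ and $\|\A\|_{\infty}$), keeping the Lipschitz constant $L$. Writing the bounded open set as a finite union of intervals $\Omega=\bigcup_{j=1}^{m}(a_{j},b_{j})$, extend $u$ to a compactly supported $\widetilde u\in BV(\R)\cap L^{1}(\R)$ by setting it equal to the one-sided trace of $u$ at each endpoint on a short exterior segment, so that $\widetilde u$ is continuous across every point of $\partial\Omega$ and $|D\widetilde u|$ carries no atom there; the residual jumps needed to reach $0$ are placed far from $\overline\Omega$. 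With a standard symmetric mollifier $\rho_{\varepsilon}$ put $u_{\varepsilon}:=\widetilde u*\rho_{\varepsilon}\in C_{c}^{\infty}(\R)$; then $u_{\varepsilon}\to u$ in $L^{1}(\Omega)$. Since $u_{\varepsilon}'=(D\widetilde u)*\rho_{\varepsilon}$, the pointwise bound $|u_{\varepsilon}'|\le|D\widetilde u|*\rho_{\varepsilon}$ together with Fubini gives
\[
\int_{\Omega}|u_{\varepsilon}'|\A\,\de x\le\int_{\Omega}\A(x)\big(|D\widetilde u|*\rho_{\varepsilon}\big)(x)\,\de x=\int_{\R}\Big(\int_{\Omega}\A(x)\rho_{\varepsilon}(x-y)\,\de x\Big)\de|D\widetilde u|(y)\le\int_{\R}(\A*\rho_{\varepsilon})(y)\,\de|D\widetilde u|(y),
\]
and the Lipschitz estimate $(\A*\rho_{\varepsilon})(y)\le\A(y)+L\varepsilon$ holds for every $y$.

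Finally I would pass to the limit. The factor $\rho_{\varepsilon}(x-y)$ with $x\in\Omega$ forces $y$ to lie within $\varepsilon$ of $\Omega$; by the trace-matching extension, for $\varepsilon$ small the restriction of $|D\widetilde u|$ to the $\varepsilon$-neighbourhood of $\Omega$ equals $|Du|$ restricted to $\Omega$ (no boundary atoms, the auxiliary jumps being far away and the exterior segments locally constant). Hence
\[
\int_{\Omega}|u_{\varepsilon}'|\A\,\de x\le\int_{\Omega}\big(\A(y)+L\varepsilon\big)\,\de|Du|(y)=TV(u;\A)+L\varepsilon\,|Du|(\Omega),
\]
and letting $\varepsilon\to0^{+}$ yields $\limsup_{\varepsilon}\int_{\Omega}|u_{\varepsilon}'|\A\,\de x\le TV(u;\A)$ because $|Du|(\Omega)<+\infty$. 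Combined with the lower semicontinuity bound this forces $\lim_{\varepsilon}\int_{\Omega}|u_{\varepsilon}'|\A\,\de x=TV(u;\A)$, and a diagonal choice $\varepsilon=\varepsilon_{n}\downarrow0$ delivers the required sequence $\{u_{n}\}\subseteq C_{c}^{\infty}(\R)$. The main obstacle is the boundary: one must ensure that mollification does not transport spurious weighted variation into $\Omega$ across $\partial\Omega$. In one dimension $\partial\Omega$ is a finite set, so the continuous extension annihilates all boundary atoms, and the only remaining subtlety is that the Lipschitz weight commutes with mollification up to the error $L\varepsilon$ — precisely where the continuity of $\A$ is used decisively.
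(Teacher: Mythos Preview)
The paper does not supply its own proof of this statement: Theorem~\ref{densbv} appears in Appendix~\ref{baldibv} purely as a result quoted from \cite[Theorem~3.4]{Baldi}, so there is no in-paper argument to compare your proposal against.

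Your argument is sound in the one-dimensional setting the paper works in. The key simplification you exploit is that a continuous $A_{1}$ weight on a bounded interval is bounded above and below by positive constants, which trivialises the functional-analytic side ($L^{1}(\Omega;\A)\simeq L^{1}(\Omega)$, $\BV(\Omega;\A)=\BV(\Omega)$) and reduces matters to a classical mollification with the Lipschitz correction $+L\varepsilon$. Baldi's original proof is written in $\R^{n}$ and follows the Anzellotti--Giaquinta scheme with an exhaustion of $\Omega$ and a subordinate partition of unity; that machinery is unnecessary here because $\partial\Omega$ is a finite set and your trace-matching extension kills all boundary atoms directly. Two minor points: (i) the last inequality in your displayed chain, passing to $\int_{\R}(\A*\rho_{\varepsilon})\,\de|D\widetilde u|$, re-includes the far-away jumps of $\widetilde u$; to make the subsequent restriction rigorous, observe first that $\int_{\Omega}\A(x)\rho_{\varepsilon}(x-y)\,\de x=0$ whenever $\dist(y,\Omega)>\varepsilon$, and only then bound by $(\A*\rho_{\varepsilon})(y)$ on the remaining set; (ii) you silently assume $\Omega$ bounded --- this is the standing hypothesis throughout the paper ($\Omega=(a,b)$), but is not explicit in the theorem as stated.
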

A similar version of this density result can be found in \cite[Proposition 2.4]{Trillos}. In what follows,  we recall a Poincaré inequality proved in \cite[Theorem 4.2]{Baldi}.

\begin{teo}[{\cite[Theorem 4.2]{Baldi}}]\label{baldipoincarepesata}
Let $u\in {\rm BV}(\Omega;\A)$, with $\A\in A_{1}^{\ast}$, and $q>1$. Suppose that the local growth condition

\begin{align}\label{localgrowth}
\left(\frac{\displaystyle\int_{B(x,r)}\A(y)\de y}{\displaystyle\int_{B(x,s)}\A(y)\de y}\right)\leq c\left(\frac{r}{s}\right)^{\frac{q}{q-1}}
\end{align}
holds for any pair of balls $B(x,r)\subset B(x,s)$ in $\R$. Then there exist two positive constants $C_{1}, C_{2}$ such that the following inequalities hold true:
\begin{itemize}
\item
\begin{align*}
\left(\fint_{B}\vert u-u_{B}\vert^{q}\A(y) \de y\right)^{\frac{1}{q}}\leq \frac{rC_{1}}{B}TV(u;\A)(B)
\end{align*}
for all balls $B=B(x,r)\subset \R$, where $u_{B}\coloneqq \displaystyle\fint_{B}u(y)\de y$, and 
\begin{align*}
TV(u;\A)(B)\coloneqq \displaystyle\int_{B}\A(x) \,\de \vert Du\vert(x).
\end{align*}
\item Suppose that 
\begin{align*}
\limsup_{R\rightarrow +\infty}R\left(\displaystyle\int_{B(x,R)}\A(y)\de y\right)^{\frac{1}{q}-1}<+\infty.
\end{align*}
Then 
\begin{align*}\label{Baldi:Poinc2}
\norma{u}_{L^{q}(\Omega;\A)}\leq C_{2}TV(u;\A)(\R).
\end{align*}
\end{itemize}
\end{teo}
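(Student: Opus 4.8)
The plan is to prove both inequalities for Lipschitz (indeed $C_c^\infty$) functions first, and then to pass to a general $u\in {\rm BV}(\Omega;\A)$ by the density Theorem \ref{densbv} together with the lower semicontinuity of $TV(\cdot;\A)$ and of the weighted $L^q$-norm with respect to $L^1(\Omega;\A)$-convergence. For Lipschitz functions the weighted total variation reduces to $TV(u;\A)(B)=\int_B|u'|\,\A\,\de x$, which turns the remaining estimates into elementary integral manipulations. Throughout I write $\mu(B):=\int_B\A\,\de y$, so that $TV(u;\A)(B)=\int_B\A\,\de|Du|$ and the hypotheses become a two-sided comparison of $\mu$ on nested intervals.

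The core is a pointwise potential representation. In one dimension, writing $u_B=\fint_B u\,\de y$ and $B=(x_0-r,x_0+r)$, one has the exact identity
\begin{equation*}
u(x)-u_B=\frac{1}{|B|}\int_B w(x,t)\,u'(t)\,\de t,
\end{equation*}
where $w(x,t)$ is a signed kernel with $|w(x,t)|\le|B|$ equal, up to sign, to the length of the portion of $B$ lying on the far side of $t$ from $x$; in particular $|w(x,t)|$ depends on $x$ only through the side of $t$ on which $x$ lies. I would then estimate $\|u-u_B\|_{L^q(B,\A)}$ by Minkowski's integral inequality, reducing matters to the quantity $\big(\int_B|w(x,t)|^q\A(x)\,\de x\big)^{1/q}$. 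Here the growth condition \eqref{localgrowth} enters decisively: it is exactly an upper bound of order $q'=q/(q-1)$ on the growth of $\mu$ on subintervals of $B$, and it upgrades the crude estimate $|w|\le|B|$ into a gain of a factor $r$ together with the sharp $L^q$-integrability, so that the operator $|u'|\mapsto u-u_B$ is bounded from $L^1(B,\A)$ into $L^q(B,\A)$ with norm controlled by $r\,\mu(B)^{-1}$ up to a constant. Finally, replacing the unweighted integral of $|u'|$ by the weighted one through the $A_1$ comparison $\A(t)\gtrsim\mu(B)/|B|$ — valid at every point since $\A\in A_1^\ast$, which is essential because $Du$ may charge Lebesgue-null sets — yields the first inequality.

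For the global inequality I would localize. Fixing a base point and using that $u$ has compact support (so that $u_{B(x,R)}\to0$ as $R\to+\infty$), decompose $\R$ into the dyadic scales $B_k:=B(x,2^k r_0)$ and telescope $u=\sum_{k\ge0}\big(u_{B_k}-u_{B_{k+1}}\big)$, bounding each oscillation $u_{B_k}-u_{B_{k+1}}$ by the local Sobolev--Poincar\'e inequality on $B_{k+1}$. Estimating the $L^q(\mu)$-norm of the sum and summing the resulting series in $k$ is controlled precisely by the decay hypothesis $\limsup_{R\to+\infty}R\,\mu(B(x,R))^{1/q-1}<+\infty$, which guarantees that the contributions of the large scales are summable, leaving $\norma{u}_{L^q(\Omega;\A)}\le C_2\,TV(u;\A)(\R)$.

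The main obstacle is the local potential estimate, i.e.\ proving that the integration operator with kernel $w$ maps $L^1(\A)$ into $L^q(\A)$ with the sharp dependence on $r$. This is where \eqref{localgrowth} must be used in full: the crude pointwise bound $|u(x)-u_B|\le|Du|(B)$ alone loses both the factor $r$ and the higher integrability, so the refinement of $w(x,t)$ near the endpoints of $B$, combined with the growth control of $\mu$ on the corresponding subintervals, is the delicate point. The reduction to Lipschitz functions and the pointwise (rather than a.e.) $A_1^\ast$ comparison of $\A$ with its average are the technically convenient steps that legitimize the argument even though $Du$ may be singular.
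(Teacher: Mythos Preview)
The paper does not prove this theorem: it is stated in Appendix~\ref{baldibv} purely as a recollection of \cite[Theorem~4.2]{Baldi}, with no argument given. So there is no ``paper's own proof'' to compare against; your sketch is effectively a proposed reconstruction of Baldi's original proof.

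That said, your outline is broadly in the right spirit (potential representation plus the growth condition \eqref{localgrowth} to get the $L^1\to L^q$ mapping, then a dyadic chaining for the global version), but there is a concrete gap in the reduction step. You invoke Theorem~\ref{densbv} to pass from smooth/Lipschitz $u$ to general $u\in{\rm BV}(\Omega;\A)$, yet Theorem~\ref{densbv} as stated here requires $\A\in{\rm Lip}(\Omega)$, whereas the present statement only assumes $\A\in A_1^\ast$. You would need either a density result under the weaker hypothesis $\A\in A_1^\ast$ (which Baldi may well have, but it is not the theorem recalled in this paper), or to carry out the potential estimate directly at the level of measures $Du$ rather than for $|u'|\,\de x$. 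The latter is in fact where the pointwise $A_1^\ast$ condition becomes indispensable, as you correctly note: since $|Du|$ can charge $\LL^1$-null sets, the comparison $\A(t)\gtrsim\mu(B)/|B|$ must hold everywhere on $B$, not merely a.e. Making this precise---i.e.\ showing the kernel estimate and the weighted-to-unweighted passage work against a general Radon measure $|Du|$---is the part of your sketch that is asserted rather than argued.
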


\textsc{Acknowledgments.}
The authors are members of  the Istituto Nazionale di Alta Matematica (INdAM), GNAMPA Gruppo Nazionale per l'Analisi Matematica, la Probabilità e le loro Applicazioni, and are partially supported by the INdAM--GNAMPA 2023 Project \textit{Problemi variazionali degeneri e singolari} and the INdAM--GNAMPA 2024 Project \textit{Pairing e div-curl lemma: estensioni a campi debolmente derivabili e diﬀerenziazione non locale}. Part of this work was undertaken while the first and third authors were visiting Sapienza University and SBAI Department in Rome. They would like to thank these institutions for the support and warm hospitality during the visits. 
\\
This study was carried out within the ''2022SLTHCE - Geometric-Analytic Methods for PDEs and Applications (GAMPA)" project – funded by European Union – Next Generation EU  within the PRIN 2022 program (D.D. 104 - 02/02/2022 Ministero dell’Università e della Ricerca). This manuscript reflects only the authors’ views and opinions and the Ministry cannot be considered responsible for them.
\\
\section*{Declarations}
\noindent
{\bf Data Availability} Authors can confirm that all relevant data are included in the article. \\
\vskip 0,1cm
\noindent
{\bf Conflict of interest} The authors confirm that there is no Conflict of interest.

\end{document}